\crefname{hypothesis}{Hypothesis}{Hypotheses}
\definecolor{matlab_orange}{rgb}{1.00,0.41,0.16}
\definecolor{matlab_blue}{rgb}{0.00,0.45,0.74}
\definecolor{matlab_purple}{rgb}{0.49,0.18,0.56}
\Crefname{ALC@unique}{Line}{Lines}
\colorlet{texcscolor}{blue!50!black}
\colorlet{texemcolor}{red!70!black}
\colorlet{texpreamble}{red!70!black}
\colorlet{codebackground}{black!25!white!25}
\lstdefinestyle{siamlatex}{%
  style=tcblatex,
  texcsstyle=*\color{texcscolor},
  texcsstyle=[2]\color{texemcolor},
  keywordstyle=[2]\color{texemcolor},
  moretexcs={cref,Cref,maketitle,mathcal,text,headers,email,url},
}
\DeclareTotalTCBox{\code}{ v O{} }
{ %fontupper=\ttfamily\color{texemcolor},
  fontupper=\ttfamily\color{black},
  nobeforeafter,
  tcbox raise base,
  colback=codebackground,colframe=white,
  top=0pt,bottom=0pt,left=0mm,right=0mm,
  leftrule=0pt,rightrule=0pt,toprule=0mm,bottomrule=0mm,
  boxsep=0.5mm,
  #2}{#1}
\patchcmd\newpage{\vfil}{}{}{}
\title{Noninvasive Adaptive Control of a Class of Nonlinear Systems With Unknown Parameters\thanks{Submitted to the editors DATE.
\funding{This work was supported by the Engineering and Physical Sciences Research Council (EP/W032236/1).}}}
\author{Hamed Rezaee\thanks{Department of Mechanical Engineering,  Imperial College London, London, UK (\email{h.rezaee@imperial.ac.uk}).}
	\and Ludovic Renson\thanks{Department of Mechanical Engineering,  Imperial College London, London, UK (\email{l.renson@imperial.ac.uk}).}}
\title{Noninvasive Adaptive Control of a Class of Nonlinear Systems With Unknown Parameters\thanks{Submitted to the editors DATE.
\funding{This work was supported by the Engineering and Physical Sciences Research Council (EP/W032236/1).}}}
\author{Hamed Rezaee\thanks{Department of Mechanical Engineering,  Imperial College London, London, UK (\email{h.rezaee@imperial.ac.uk}).}
	\and Ludovic Renson\thanks{Department of Mechanical Engineering,  Imperial College London, London, UK (\email{l.renson@imperial.ac.uk}).}}
\begin{document}
\maketitle

%% ------------------------------------------------------------------
%% ABSTRACT
%% ------------------------------------------------------------------
\begin{tcbverbatimwrite}{tmp_\jobname_abstract.tex}
\begin{abstract}
Control-based continuation (CBC) is a general and systematic method to explore the dynamic response of a physical system and perform bifurcation analysis directly during experimental tests. Although CBC has been successfully demonstrated on a wide range of systems, rigorous and general approaches to designing a noninvasive controller underpinning the methodology are still lacking. In this paper, a noninvasive adaptive control strategy for a wide class of nonlinear systems with unknown parameters is proposed. We prove that the proposed adaptive control methodology is such that the states of the dynamical system track a reference signal in a noninvasive manner if and only if the reference is a response of the uncontrolled system to  an excitation force. Compared to the existing literature, the proposed method does not require any a priori knowledge of some system parameters, does not require a persistent excitation, and is not restricted to linearly-stable systems, facilitating the application of CBC to a much larger class of systems than before. Rigorous mathematical analyses are provided, and the proposed control method is numerically demonstrated on a range of single- and multi-degree-of-freedom nonlinear systems, including a Duffing oscillator with multiple static equilibria. It is especifically shown that the unstable periodic orbits of the uncontrolled systems can be stabilized and reached, noninvasively, in controlled conditions.

\end{abstract}
\begin{keywords}
Adaptive control, bi-stability, noninvasive control, nonlinear control, nonlinear dynamics, unstable periodic orbits.
\end{keywords}

\begin{MSCcodes}
	93C10, 93D21, 34C25, 37G15
\end{MSCcodes}
\vspace{0.1 in}
\end{tcbverbatimwrite}
\begin{abstract}
Control-based continuation (CBC) is a general and systematic method to explore the dynamic response of a physical system and perform bifurcation analysis directly during experimental tests. Although CBC has been successfully demonstrated on a wide range of systems, rigorous and general approaches to designing a noninvasive controller underpinning the methodology are still lacking. In this paper, a noninvasive adaptive control strategy for a wide class of nonlinear systems with unknown parameters is proposed. We prove that the proposed adaptive control methodology is such that the states of the dynamical system track a reference signal in a noninvasive manner if and only if the reference is a response of the uncontrolled system to  an excitation force. Compared to the existing literature, the proposed method does not require any a priori knowledge of some system parameters, does not require a persistent excitation, and is not restricted to linearly-stable systems, facilitating the application of CBC to a much larger class of systems than before. Rigorous mathematical analyses are provided, and the proposed control method is numerically demonstrated on a range of single- and multi-degree-of-freedom nonlinear systems, including a Duffing oscillator with multiple static equilibria. It is especifically shown that the unstable periodic orbits of the uncontrolled systems can be stabilized and reached, noninvasively, in controlled conditions.

\end{abstract}
\begin{keywords}
Adaptive control, bi-stability, noninvasive control, nonlinear control, nonlinear dynamics, unstable periodic orbits.
\end{keywords}

\begin{MSCcodes}
	93C10, 93D21, 34C25, 37G15
\end{MSCcodes}
\vspace{0.1 in}

%% ------------------------------------------------------------------
%% END HEADER
%% ------------------------------------------------------------------

\section{Introduction}
The general objective of a control system is to use the inputs of a dynamical system to lead state variables toward a desired value or trajectory. Noninvasive control refers to a particular class of control systems for which the control inputs asymptotically become zero if the states of the system approach responses of the underlying uncontrolled system. Despite vanishing, the controller can provide a stabilizing effect such that desired responses can be observed even if they correspond to unstable periodic orbits of the underlying uncontrolled system. An early application of this idea is chaos control where unstable periodic orbits embedded in a chaotic attractor are stabilized~\cite{ChaosControlHandbook,Boccaletti00}. In this context, numerous noninvasive control approaches have been developed such as the so-called OGY method~\cite{OGY,Rega10}, time delayed feedback (TDF)~\cite{Pyragas92,FlunkertPhysRevE:11} and active filters~\cite{PyraPhysLett:07,LuPhyLet:08}. Chaos control has found applications in electronic circuits~\cite{PyraPhysLett:07,LuPhyLet:08}, chemical oscillators~\cite{Peng91,Petrov93}, lasers and nonlinear optics~\cite{BenklerJOpt:00,SchikoraPhysRevE:08}, secure communication~\cite{JustChaos:23,Hayes94}, and fluid, mechanical, biological, and biochemical systems~\cite{Singer91,Garfinkel92}. More recent applications of noninvasive control use phase-locked loops (PLLs) to stabilize periodic oscillations in a range of mechanical systems \cite{MojrzischPAMM:12,HippoldArxiv:24,Abeloos22,DenisMSSP:18,PeterMSSP:17}.

More generally, noninvasive control provides a systematic means to explore the dynamics of a system purely based on its input-output relationship. The controller can be designed to steer the controlled dynamics toward responses described by a so-called reference signal. The noninvasive control inputs asymptotically vanish if the reference signal corresponds to a response of the uncontrolled system, which guarantees that the response is observed without affecting its position in state and parameter spaces. Finding a reference signal that results in vanishing control inputs defines a zero-problem whose solutions can be found iteratively and tracked in parameter space using suitable path-following techniques~\cite{Schilder2015}. Control-based continuation (CBC)~\cite{Sieber2008} exploits this idea for bifurcation analysis of physical systems directly during experiments. If properly designed, the control system can reduce (or even overcome) the sensitivity to initial conditions and stability changes due to bifurcations, which results in more repeatable experimental tests compared to traditional (uncontrolled) approaches. CBC has been used on a wide range of mechanical structures to measure important dynamics characteristics such as frequency response curves~\cite{BartonJVC:12,BartonPhysRevE:13,Bureau13,Schilder2015,KleymanMRC:20}, backbone curves~\cite{Renson2016,Abeloos22}, and fold bifurcation curves~\cite{Renson19b,Renson17}. More recently, CBC was used to characterize experimentally the equilibria of the Zeeman catastrophe machine~\cite{Dittus23}, the dynamics of bubbles~\cite{Fontana22}, the buckling of beams~\cite{Neville18,Shen21a,Shen21b}, and the limit cycle oscillations of an airfoil~\cite{Lee23,BeregiNonDyn:21}, and demonstrated numerically on equilibria and periodic orbits in biological systems~\cite{deCesare22,BlythNonDyn:23}. Identified features can then be exploited for model development, calibration, and validation~\cite{Lee23,Beregi23,Song23}.
CBC contrasts with other noninvasive control methods like the OGY, TDF, and PLL in that it uses a reference signal. The reference signal acts as a proxy for the system states, which facilitates steering the system dynamics toward desired behaviors and exploring regions where multiple responses coexist. However, solving for the reference signal to achieve noninvasive control can be time-consuming, especially for slow-fast systems as investigated in~\cite{BlythNonDyn:23}.

Although CBC has been exploited in numerous studies, the majority of the existing literature primarily emphasizes its application rather than the systematic design and theoretical analysis of its underlying noninvasive control method. As such, new applications often require a significant number of trials and errors to obtain a controller that works throughout the parameter range of interest. In this paper, we contribute to filling this void by proposing a systematic noninvasive control strategy. Based on the idea of adaptive control, the gain/parameters of the control system are not fixed. They will be adjusted under an adaptation strategy that guarantees the tracking of desired reference signals even if the parameters of the systems are unknown. The proposed control strategy addresses a wide class of high-order nonlinear systems with multiple degrees-of-freedom, and is designed such that the desired reference signal can be tracked in a noninvasive manner if and only if it is a response of the underlying uncontrolled system.

Recent work in this direction was presented in~\cite{PanagiotopoulosSIAM:23} where the design of noninvasive control using various techniques such as zero-in-equilibrium feedback control and washout filters was addressed. However, due to model nonlinearities and parameter uncertainties, achieving a desired tracking performance based on linear approaches may not be guaranteed. Accounting for model uncertainty, a noninvasive control method based on model-reference adaptive control and pole-placement adaptive control was proposed for linear discrete-time systems with periodic behavior in~\cite{LiJVC:20}. The approach was then extended to linear continuous-time systems with periodic behavior in~\cite{LiNonDyn:21} and to a class of nonlinear systems with periodic behavior in~\cite{LiNonDyn:22}. In~\cite{LiNonDyn:21} and \cite{LiNonDyn:22}, the linear terms of the system were assumed to be Hurwitz stable and known. This helps design a model reference for a proper adaptive controller and guarantees stability. However, this assumption prevents the use of the controller in applications exhibiting multiple static equilibria (such as buckled beams and plates) and systems exhibiting self-excited oscillations (as in machine tool vibration, aeroelastic flutter, etc.). Another assumption made in~\cite{LiNonDyn:21} and \cite{LiNonDyn:22} is that the desired response is persistently exciting. Based on such an assumption, the control input associated with a desired response should be rich enough (in terms of harmonic modes) to fully excite the dynamics of the system. Accordingly, it becomes possible to uniquely estimate the true unknown parameters of the system. This knowledge of the true parameters is then used in the controller to guarantee asymptotic tracking of the desired response together with the noninvasiveness of the control input. As shown in Section~\ref{SecSimulations}, satisfying this assumption is not always possible or may require a (very) fine discretization of the dynamics (using, for instance, a large number of Fourier modes), which increases the complexity and cost of solving the CBC root problem.

The control strategy proposed in the present paper lifts those assumptions such that no knowledge of any model parameter is needed, no stability assumptions are made on the linear terms of the system, and the performance of the proposed control strategy does not rely on the persistent excitation of the desired response. This makes our control method applicable to a much larger range of systems than before. 

The structure of this paper is as follows. Notation is provided in the next section. Section~\ref{SecProblem} defines the noninvasive control problem in more precise terms, and the proposed noninvasive adaptive control method is presented 
in Section~\ref{SecControlDesign} along with rigorous proofs of its convergence and stability. In Section~\ref{SecSimulations}, simulation results on three different mechanical structures demonstrate the performance of the proposed control strategy, and Section~\ref{SecConclusions} concludes this paper.

\section{Notation}\label{Sec:Notation}
Throughout the paper $\mathbb{R}$ and $\mathbb{R}_{>0}$ denote the set of real and positive real numbers, respectively. $\mathbb{R}^{n}$ and $\mathbb{R}^{n\times m}$ denote the sets of $n\times 1$ real vectors and $n\times m$ real matrices, respectively.
$v^{(k)}(t)$ denotes the $k^{\text{th}}$ derivative of a variable $v(t)$. $\mathbf{0}_{k}$ is a $k\times 1$ vector of zeros. $|\cdot|$ denotes the absolute value. $\|\cdot\|$ denotes the 2-norm. $|\cdot|_\infty$ denotes the supremum value, and ${\rm sat}(\cdot)$ denotes the saturation function, which for a scalar $v$, is defined as
\begin{equation*}\label{satfunc}
\mathrm{sat}(v/\epsilon)=\left\{\begin{array}{rl}
1&\quad v \geq \epsilon\\
v/{\epsilon}&\quad -\epsilon \leq v \leq \epsilon\\
-1&\quad v \leq -\epsilon,\\
\end{array}
\right.
\end{equation*}
where $\epsilon \in \mathbb{R}_{>0}$. Moreover,  $\mathrm{pol}(\lambda_i,k)$ is a polynomial of degree $k$ with parameters $\lambda_i\in \mathbb{R},i=1,2,\ldots,k,$ defined as $
\mathrm{pol}(\lambda_i,k)=s^{k}+\lambda_{k}s^{k-1}+\ldots+\lambda_2s+\lambda_1$.

\section{Problem Statement}\label{SecProblem}
Consider a class of nonlinear systems with $p$ degrees-of-freedom described as follows:
\begin{equation}\label{system}
\begin{split}
    \mathbold{x}^{(n)}(t)&=\mathbold{F}(\mathbold{\xi}(t))\mathbold{\theta}+\mathbold{u}(t)\\
 \mathbold{\xi}&=\begin{bmatrix}
                  \mathbold{x}^{(n-1)\top } (t) & \ldots &   \dot{\mathbold{x}}^\top(t)  &   \mathbold{x}^\top(t)
                 \end{bmatrix}^\top,
\end{split}
\end{equation}
where $\mathbold{x}^{(k)}(t)\in \mathbb{R}^p, k\in \{0,1,2,\ldots,n-1\},$ are the state vectors (assumed to be fully measurable), $\mathbold{F}(\mathbold{\xi}(t))\in \mathbb{R}^{p\times m}$ is a locally Lipschitz nonlinear function of the states which is known, $\mathbold{\theta}\in \mathbb{R}^m$ is a vector of unknown constant parameters, and $\mathbold{u}(t)\in \mathbb{R}^p$ is the vector of inputs. This system can model a wide range of nonlinear systems that are linear in parameters (e.g., forced Duffing oscillators~\cite{AbeloosNonDyn:21,LenciMSSP:22}, forced van der Pol oscillators~\cite{SIAMDyn:03,MaCSF:21}, aircraft attitude dynamics~\cite{RichardsonAJ:06}, and beam structures~\cite{RensonProcA:18,EhrhardtJSV:19}).
System~\eqref{system} considers identity input gains. The control method proposed here extends to systems with input gains in the form $\mathbold{B}\mathbold{u}(t)$ if $\mathbold{B}$ is invertible and provided that $\mathbold{B}$ can be identified a priori through tests. In such cases, the value of $\mathbold{u}(t)$ can be uniquely determined from  $\mathbold{B}\mathbold{u}(t)$.

Assume that the bounded vector $\mathbold{\xi}^*(t)=\begin{bmatrix}
                  \mathbold{x}^{*(n-1)\top }(t)  & \ldots &   \dot{\mathbold{x}}^{*\top}(t)  &   \mathbold{x}^{*\top}(t)
                 \end{bmatrix}^\top$ is a so-called \textit{natural response} of the uncontrolled system~\eqref{system} to a harmonic force $\mathbold{u}(t)=\mathbold{\sigma}(t)$.  Such natural responses also include equilibria and limit cycles when there is no excitation, i.e., when $\mathbold{\sigma}(t)=\mathbf{0}_p$.
                 In an uncontrolled setting, various experiments for $\mathbold{u}(t)=\mathbold{\sigma}(t)$ but with different initial states may lead to different natural responses and/or significantly different transient behaviors. In a controlled setting, it is possible to compensate for the effect of the initial conditions such that $\mathbold{\xi}(t)$ converges to a desired natural response $\mathbold{\xi}^*(t)$ in a `reasonable' time, and the control input simultaneously converges to zero, making the controller noninvasive.

Let $\mathbold{u}(t)$ be the combination of the external excitation $\mathbold{\sigma}(t)$ and a control input $\mathbold{u}'(t)$ as $\mathbold{u}(t)=\mathbold{\sigma}(t)+\mathbold{u}'(t)$.
	Since $\mathbold{\theta}$ is unknown, we develop an adaptive control strategy for $\mathbold{u}'(t)$. We define $\mathbold{\xi}^r(t)=\begin{bmatrix}
		\mathbold{x}^{r(n-1)\top }(t)  & \ldots &   \dot{\mathbold{x}}^{r\top}(t)  &   \mathbold{x}^{r\top}(t)
	\end{bmatrix}^\top$ as a so-called \textit{reference signal}. The adaptive control strategy will steer the system dynamics toward the trajectory defined by this reference signal. We also recall that $\mathbold{\xi}^*(t)$ is a natural response of the system to $\mathbold{\sigma}(t)$, implying that 
\begin{equation}\label{systemopenloop}
\mathbold{x}^{*(n)}(t)=\mathbold{F}(\mathbold{\xi}^*(t))\mathbold{\theta}+\mathbold{\sigma}(t).
\end{equation}
The objective is to design the adaptive controller such that if $\mathbold{\xi}^r(t)= \mathbold{\xi}^*(t)$, we have
\begin{equation}\label{obj1}
    \lim_{t\rightarrow \infty} (\mathbold{\xi}(t)-\mathbold{\xi}^*(t))=\mathbf{0}_{np},
\end{equation}
while the control input asymptotically converges to zero as follows:
\begin{equation*}\label{obj2}
    \lim_{t\rightarrow \infty} \mathbold{u}'(t) =\mathbf{0}_{p}.
\end{equation*}

In the context of CBC, a natural response $\mathbold{\xi}^*(t)$ is a priori unknown and the reference signal $\mathbold{\xi}^r(t)$ should be iterated until the control input vanishes, implying here that $\mathbold{\xi}^r(t)= \mathbold{\xi}^*(t)$. This defines a zero-problem whose solutions can be found using Newton-like methods and tracked in parameter space by using suitable path-following techniques~\cite{Schilder2015,Renson19b}. 

\section{Noninvasive Adaptive Tracking Strategy}\label{SecControlDesign}
The proposed noninvasive control strategy is presented in this section. The main ideas underpinning the design are first presented before providing a theorem and the associated proof.

While noninvasive adaptive control of linear and nonlinear systems has already been addressed in the literature, partial knowledge of the system parameters and the persistently exciting nature of the responses were assumed~\cite{LiJVC:20,LiNonDyn:21,LiNonDyn:22}. In this context, the main achievements of the control method proposed here are to consider all the system parameters as unknown and to lift the persistent excitation requirement. To this end, the main idea of the proposed control strategy is to use the adaptive parameters for defining an auxiliary state. This contrasts with the approach followed in~\cite{LiJVC:20,LiNonDyn:21,LiNonDyn:22} where adaptive parameters are directly used to define the control input. The proposed auxiliary state is designed in such a way that if it converges to a desired value, determined based on the desired natural response for the uncontrolled system, the real states of the system asymptotically converge to the desired response. Hence, our objective is to design a noninvasive control strategy such that the auxiliary state converges to its desired value, whereas the adaptive parameters may not converge to the real parameters of the system. Since the control strategy does not rely on an exact estimation of the unknown parameters, the desired natural response is not required to be persistently exciting. 

The control input $\mathbold{u}'(t)$ is designed as
\begin{equation}\label{uprime}
\mathbold{u}'(t)=-k(\mathbold{z}(t)-\mathbold{x}^{r(n-1)}(0)) + \mathbold{\eta}(t),
\end{equation}
where $k\in \mathbb{R}_{>0}$. In the first part of the control input \eqref{uprime}, we introduce the auxiliary variable $\mathbold{z}(t)$, defined as
\begin{equation}\label{z}
   \mathbold{z}(t)=\mathbold{x}^{(n-1)}(t)-\int_{0}^{t}\Big(\mathbold{F}(\mathbold{\xi}(\tau))\hat{\mathbold{\theta}}(\tau)+\mathbold{\sigma}(\tau)+\mathbold{\eta}(\tau)\Big)d\tau.
\end{equation}
The definition of this auxiliary variable, is inspired by the natural dynamics $\mathbold{\xi}^*(t)$ of the system \eqref{system}, which satisfies \eqref{systemopenloop} and gives after integration
 \begin{equation}\label{xstarnm1}
	\mathbold{x}^{*(n-1)}(0)=\mathbold{x}^{*(n-1)}(t)-\int_{0}^{t}\Big(\mathbold{F}(\mathbold{\xi}^*(\tau))\mathbold{\theta} +\mathbold{\sigma}(\tau)\Big)d\tau.
\end{equation}
This part of the control input, which is proportional to the error between $\mathbold{z}(t)$ and $\mathbold{x}^{*(n-1)}(0)$, guarantees the convergence of $\mathbold{z}(t)$ to $\mathbold{x}^{*(n-1)}(0)$ such that the dynamics of the states $\mathbold{x}^{(n-1)}(t)$ follows that of $\mathbold{x}^{*(n-1)}(t)$.

Building upon the first part of the control input, the second part, $\mathbold{\eta}(t)$, should be designed such that $\mathbold{x}(t)$ and its derivatives converge to $\mathbold{x}^*(t)$ and its derivatives, respectively, and the objective \eqref{obj1} is achieved. More specifically, defining the tracking error $\mathbold{e}(t)=\mathbold{x}(t)-\mathbold{x}^r(t)$, we define $\mathbold{\eta}(t)$ as
 \begin{equation}\label{eta}
 	\mathbold{\eta}(t)=-\lambda_{n-1}\mathbold{e}^{(n-1)}(t)-\cdots-\lambda_1\dot{\mathbold{e}}(t)-\phi(t) \mathbold{y}(t) -g(\mathbold{\xi}(t),\mathbold{\xi}^r(t),\hat{\mathbold{\theta}}(t))\mathrm{sat}(\mathbold{y}(t)/\epsilon),
 \end{equation}
 where $\mathbold{y}(t)$ is
 \begin{equation}\label{y}
 	\mathbold{y}(t)= \mathbold{e}^{(n-1)}(t)+\lambda_{n-1}\mathbold{e}^{(n-2)}(t)+\cdots+\lambda_1\mathbold{e}(t).
 \end{equation}
Defining the vector $\mathbold{y}(t)$ allows us to control just one vector instead of controlling $n$ vectors $\mathbold{e}^{(k)}(t),\;k\in\{0,1,\ldots,n-1\}$, simplifying the design and analysis of the control strategy in the rest of the paper. In particular, by choosing the coefficients $\lambda_i\in \mathbb{R}_{>0},\;i\in \{1,2,\ldots,n-1\},$ such that the polynomial $\mathrm{pol}(\lambda_i,n-1)$ is Hurwitz stable, the convergence of $\mathbold{y}(t)$ to zero leads to the convergence to zero of $\mathbold{e}(t)$ and its derivatives, $\mathbold{e}^{(k)}(t),\;k\in\{1,2,\ldots,n-1\},$ as well. The terms $-\phi(t) \mathbold{y}(t)$ and $-g(\mathbold{\xi}(t),\mathbold{\xi}^r(t),\hat{\mathbold{\theta}}(t))\mathrm{sat}(\mathbold{y}(t)/\epsilon)$ in \eqref{eta}
   are stabilizing terms that make the control input robust to parameter uncertainties. More specifically, $\phi(t)\in \mathbb{R}$ is a time-varying parameter updated as follows:
 \begin{equation}\label{phidot}
 	\dot{\phi}(t)=\gamma \mathbold{y}^\top(t)\mathbold{y}(t),
 \end{equation}
 with $\gamma \in \mathbb{R}_{>0}$. Based on \eqref{phidot}, $\phi(t) $ is increasing (to be large enough for stabilizing) until $\mathbold{y}(t)$ becomes zero. A similar idea is considered in designing $g(\mathbold{\xi}(t),\mathbold{\xi}^r(t),\hat{\mathbold{\theta}}(t))$, which  is a control gain considered as follows:
 \begin{equation}\label{g}
 	g(\mathbold{\xi}(t),\mathbold{\xi}^r(t),\hat{\mathbold{\theta}}(t))=  \big\|(\mathbold{F}(\mathbold{\xi}(t))-\mathbold{F}(\mathbold{\xi}^r(t)))\hat{\mathbold{\theta}}(t)\big\| +\kappa,
 \end{equation}
 with $\kappa \in \mathbb{R}_{>0}$. According to \eqref{g}, a larger $\big\|(\mathbold{F}(\mathbold{\xi}(t))-\mathbold{F}(\mathbold{\xi}^r(t)))\hat{\mathbold{\theta}}(t)\big\| $ leads to larger gain $g(\mathbold{\xi}(t),\mathbold{\xi}^r(t),\hat{\mathbold{\theta}}(t))$ to compassionate the effect of uncertainties. According to \eqref{uprime}, if $\mathbold{z}(t)$ converges to $\mathbold{x}^{r(n-1)}(0)$ and the tracking error  $\mathbold{e}(t)$  and its derivatives converge to zero, $\mathbold{u}'(t)$ is noninvasive.  %The main goal of designing $\mathbold{\eta}(t)$ as \eqref{eta} is to lead $\mathbold{y}(t)$ to zero.

Note that the `true system' parameters $\mathbold{\theta}$ cannot be used in $\mathbold{\eta}(t)$ and $\mathbold{z}(t)$ as they are unknown. Therefore, we use an estimate of $\mathbold{\theta}$, denoted by $\hat{\mathbold{\theta}}(t)$ and obtained as
\begin{equation}\label{thetahatdot}
 \dot{\hat{\mathbold{\theta}}}(t)=\mathbold{S}\mathbold{F}^\top(\mathbold{\xi}(t)) (\mathbold{z}(t)-\mathbold{x}^{r(n-1)}(0)),
\end{equation}
where $\mathbold{S}\in \mathbb{R}^{m\times m}$ is symmetric positive definite. According to \eqref{thetahatdot}, $\hat{\mathbold{\theta}}(t)$ will be updated until $\mathbold{z}(t)$ converges to $\mathbold{x}^{*(n-1)}(0)$.

\textit{Summary of the control parameters:} The control parameters $k$, $\kappa$, $\epsilon$, and $\gamma$ are only required to be positive and the matrix $\mathbold{S}$ should be  symmetric positive definite. The parameters $\lambda_1, \lambda_2, \ldots, \lambda_{n-1}$ must be selected such that $\mathrm{pol}(\lambda_i,k)$ is Hurwitz stable. Whilst satisfying these requirements, the control parameters can be adjusted to tune the control performance. For instance, larger $k$, $\kappa$, $\lambda_1, \lambda_2, \ldots, \lambda_{n-1}$, $\gamma$, larger entries in $\mathbold{S}$, and smaller $\epsilon$ are usually associated with shorter times for convergence of the states to the reference signal.

\begin{remark}
It should be noted that in noninvasive adaptive control strategies existing in the literature (e.g., see \cite{LiJVC:20} and \cite{LiNonDyn:22}), the term $\mathbold{F}(\mathbold{\xi}(t))\hat{\mathbold{\theta}}(t)$ is used in the control input. In those studies, if $\hat{\mathbold{\theta}}(t)$ converges to $\mathbold{\theta}$ (in the case of persistent excitation), the term $-\mathbold{F}(\mathbold{\xi}(t))\hat{\mathbold{\theta}}(t)$ in the controller compensates for the unknown nonlinear term $\mathbold{F}(\mathbold{\xi}(t))\mathbold{\theta}$.
Therefore, in those studies, the noninvasiveness of
the control input can be guaranteed by tracking a reference $\mathbold{\xi}^r(t)$.
 However, the
persistent excitation condition considered in the existing results in the literature can be difficult to satisfy.
In this paper, according to \eqref{uprime} and \eqref{eta}, the term $\mathbold{F}(\mathbold{\xi}(t))\hat{\mathbold{\theta}}(t)$  is not directly used as an additive term in the control input. Hence, it is not necessary for $\hat{\mathbold{\theta}}(t)$ to converge to the true parameters $\mathbold{\theta}$, and therefore the persistent excitation is not required. 
Such a term appears only in the gain $g(\mathbold{\xi}(t),\mathbold{\xi}^r(t),\hat{\mathbold{\theta}}(t))$ in front of $\mathrm{sat}(\mathbold{y}(t)/\epsilon)$ and not directly as an additive term in $\mathbold{u}'(t)$. As such, the term $g(\mathbold{\xi}(t),\mathbold{\xi}^r(t),\hat{\mathbold{\theta}}(t))\mathrm{sat}(\mathbold{y}(t)/\epsilon)$ converges to zero if we guarantee that $\mathbold{y}(t)$ converges to zero. Hence, whether $\hat{\mathbold{\theta}}(t)$ converges to $\mathbold{\theta}$ or not, asymptotic tracking and the noninvasiveness of the control input can be guaranteed.
\end{remark}

\begin{theorem}\label{theoremmain}
Consider the nonlinear system \eqref{system} with responses $\mathbold{\xi}^*(t)$ to the excitation  $\mathbold{\sigma}(t)$. Let $\mathbold{u}(t)=\mathbold{\sigma}(t)+\mathbold{u}'(t)$ where $\mathbold{u}'(t)$ is the control strategy defined in \eqref{uprime} with a bounded smooth reference signal $\mathbold{\xi}^r(t)$. Under this condition, if $\mathbold{\xi}^r(t)=\mathbold{\xi}^*(t)$,
             \begin{equation}\label{obj}
               \lim_{t\rightarrow \infty} (\mathbold{\xi}(t)-\mathbold{\xi}^*(t))=\mathbf{0}_{np},
             \end{equation}
while $\mathbold{u}'(t)$ is bounded and noninvasive. Moreover, if $\mathbold{\xi}^r(t)$ is not a response of the system to $\mathbold{\sigma}(t)$; then,  while $\mathbold{\xi}(t)$ remains bounded,  a stable equilibrium of the tracking error system implies nonzero tracking error and nonzero control input.
\end{theorem}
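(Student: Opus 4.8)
\emph{Error coordinates.} I would pass to the error variables $\tilde{\mathbold{\theta}}=\hat{\mathbold{\theta}}-\mathbold{\theta}$, $\tilde{\mathbold{z}}=\mathbold{z}-\mathbold{x}^{r(n-1)}(0)$, the vector $\mathbold{y}$ of \eqref{y}, the stacked error $\mathbold{E}=[\mathbold{e}^\top,\dot{\mathbold{e}}^\top,\ldots,\mathbold{e}^{(n-2)\top}]^\top$, and the \emph{reference defect} $\mathbold{d}(t)=\mathbold{x}^{r(n)}(t)-\mathbold{F}(\mathbold{\xi}^r(t))\mathbold{\theta}-\mathbold{\sigma}(t)$, which by \eqref{systemopenloop} vanishes identically if and only if $\mathbold{\xi}^r$ is a response of \eqref{system} to $\mathbold{\sigma}$. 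Differentiating \eqref{z} and inserting \eqref{system} (with $\mathbold{u}=\mathbold{\sigma}+\mathbold{u}'$), \eqref{uprime}, \eqref{eta}, \eqref{phidot}, \eqref{thetahatdot} gives the error dynamics
\begin{equation*}
\dot{\tilde{\mathbold{z}}}=-\mathbold{F}(\mathbold{\xi})\tilde{\mathbold{\theta}}-k\tilde{\mathbold{z}},\qquad
\dot{\tilde{\mathbold{\theta}}}=\mathbold{S}\mathbold{F}^\top(\mathbold{\xi})\tilde{\mathbold{z}},\qquad
\dot{\phi}=\gamma\|\mathbold{y}\|^2,
\end{equation*}
\begin{equation*}
\dot{\mathbold{y}}=\big(\mathbold{F}(\mathbold{\xi})-\mathbold{F}(\mathbold{\xi}^r)\big)\mathbold{\theta}-\mathbold{d}-k\tilde{\mathbold{z}}-\phi\mathbold{y}-g(\mathbold{\xi},\mathbold{\xi}^r,\hat{\mathbold{\theta}})\,\mathrm{sat}(\mathbold{y}/\epsilon),
\end{equation*}
together with $\dot{\mathbold{E}}=\mathcal{A}\mathbold{E}+\mathcal{B}\mathbold{y}$, where $\mathcal{A}$ is the companion matrix of $\mathrm{pol}(\lambda_i,n-1)$ (hence Hurwitz) and $\mathbold{e}^{(n-1)}=\mathbold{y}-\mathcal{C}\mathbold{E}$ for a suitable $\mathcal{C}$. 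The crucial structural point is that $\dot{\tilde{\mathbold{z}}}$ and $\dot{\tilde{\mathbold{\theta}}}$ do not involve $\mathbold{d}$, so these two equations hold whether or not $\mathbold{\xi}^r$ is a natural response.

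\emph{First assertion.} Take $\mathbold{\xi}^r=\mathbold{\xi}^*$, so $\mathbold{d}\equiv\mathbf{0}_p$. For $V_1=\tfrac12\|\tilde{\mathbold{z}}\|^2+\tfrac12\tilde{\mathbold{\theta}}^\top\mathbold{S}^{-1}\tilde{\mathbold{\theta}}$ the quadratic cross terms cancel and $\dot V_1=-k\|\tilde{\mathbold{z}}\|^2\le0$, so $\tilde{\mathbold{z}},\tilde{\mathbold{\theta}},\hat{\mathbold{\theta}}$ are bounded and $\tilde{\mathbold{z}}\in L^2$. I would then augment $V_1$ with $\tfrac12\|\mathbold{y}\|^2$ and a term $\mathbold{E}^\top\mathbold{P}\mathbold{E}$, $\mathbold{P}\succ0$ solving the Lyapunov equation of $\mathcal{A}$, and bound the indefinite terms of $\dot{\mathbold{y}}$ using $\mathbold{y}^\top\mathrm{sat}(\mathbold{y}/\epsilon)\ge\|\mathbold{y}\|-p\epsilon$ and the inequality built into \eqref{g}, namely $\mathbold{y}^\top(\mathbold{F}(\mathbold{\xi})-\mathbold{F}(\mathbold{\xi}^r))\hat{\mathbold{\theta}}-g\,\mathbold{y}^\top\mathrm{sat}(\mathbold{y}/\epsilon)\le-\kappa\|\mathbold{y}\|+g\,p\epsilon$, together with the local Lipschitz property of $\mathbold{F}$ and Young's inequality; the term $-\phi\mathbold{y}$ absorbs the residual indefinite contributions. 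This yields boundedness of all closed-loop signals and absence of finite escape. Since $\phi$ is nondecreasing and bounded it converges, so $\int_0^\infty\|\mathbold{y}\|^2\,dt=\gamma^{-1}(\phi(\infty)-\phi(0))<\infty$, i.e.\ $\mathbold{y}\in L^2$; boundedness of all signals makes $\dot{\tilde{\mathbold{z}}}$ and $\dot{\mathbold{y}}$ bounded, so $\tilde{\mathbold{z}}$ and $\mathbold{y}$ are uniformly continuous and Barbalat's lemma gives $\tilde{\mathbold{z}}\to\mathbf{0}_p$ and $\mathbold{y}\to\mathbf{0}_p$. Then $\dot{\mathbold{E}}=\mathcal{A}\mathbold{E}+\mathcal{B}\mathbold{y}$ (Hurwitz, vanishing input) forces $\mathbold{E}\to\mathbf{0}$, hence $\mathbold{e}^{(k)}\to\mathbf{0}_p$ for all $k$, which with $\mathbold{\xi}^r=\mathbold{\xi}^*$ is exactly \eqref{obj}; and in \eqref{uprime}--\eqref{eta} every summand of $\mathbold{u}'$ vanishes in the limit ($k\tilde{\mathbold{z}}\to\mathbf{0}_p$, $\lambda_i\mathbold{e}^{(i)}\to\mathbf{0}_p$, $\phi\mathbold{y}\to\mathbf{0}_p$ as $\phi$ is bounded, $g\,\mathrm{sat}(\mathbold{y}/\epsilon)\to\mathbf{0}_p$ as $g$ is bounded and $\mathbold{y}\to\mathbf{0}_p$), so $\mathbold{u}'$ is bounded and noninvasive.

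\emph{Second assertion.} Here I would argue by contradiction. Suppose $\mathbold{\xi}^r$ is not a response of \eqref{system} to $\mathbold{\sigma}$ (so $\mathbold{d}\not\equiv\mathbf{0}_p$) and that the tracking-error system reaches a stable equilibrium with $\mathbold{\xi}$ bounded; there all state derivatives vanish, so $\dot{\mathbold{z}}=0$, $\dot{\hat{\mathbold{\theta}}}=\mathbold{S}\mathbold{F}^\top(\mathbold{\xi})\tilde{\mathbold{z}}=0$ (hence $\mathbold{F}^\top(\mathbold{\xi})\tilde{\mathbold{z}}=\mathbf{0}_m$ since $\mathbold{S}\succ0$), $\mathbold{x}^{(n)}=\mathbf{0}_p$, and $\mathbold{e}^{(k)}=\mathbf{0}_p$ for $k\ge1$. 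Were the tracking error zero at this equilibrium, then $\mathbold{\xi}=\mathbold{\xi}^r$, $\mathbold{\eta}=\mathbf{0}_p$ and $\mathbold{u}'=-k\tilde{\mathbold{z}}$; combining $\dot{\mathbold{z}}=0$ (which reads $\mathbold{F}(\mathbold{\xi}^r)\hat{\mathbold{\theta}}+\mathbold{\sigma}=\mathbf{0}_p$) with $\mathbf{0}_p=\mathbold{x}^{(n)}=\mathbold{F}(\mathbold{\xi}^r)\mathbold{\theta}+\mathbold{\sigma}+\mathbold{u}'$ gives $k\tilde{\mathbold{z}}=\mathbold{F}(\mathbold{\xi}^r)\mathbold{\theta}+\mathbold{\sigma}=-\mathbold{F}(\mathbold{\xi}^r)\tilde{\mathbold{\theta}}$, and applying $\mathbold{F}^\top(\mathbold{\xi}^r)$ and using $\mathbold{F}^\top(\mathbold{\xi}^r)\tilde{\mathbold{z}}=\mathbf{0}_m$ yields $\|\mathbold{F}(\mathbold{\xi}^r)\tilde{\mathbold{\theta}}\|^2=0$; hence $\tilde{\mathbold{z}}=\mathbf{0}_p$, $\mathbold{u}'=\mathbf{0}_p$ and $\mathbold{F}(\mathbold{\xi}^r)\mathbold{\theta}+\mathbold{\sigma}=\mathbf{0}_p$, i.e.\ $\mathbold{\xi}^r$ satisfies \eqref{systemopenloop} and $\mathbold{d}\equiv\mathbf{0}_p$, contradicting the hypothesis. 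An entirely parallel manipulation of the equilibrium relations for $\mathbold{z}$, $\hat{\mathbold{\theta}}$ and the plant equation shows that zero control input at the equilibrium likewise forces $\mathbold{\xi}^r$ to be a response. Therefore at a stable equilibrium both the tracking error and the control input are nonzero, which is the ``only if'' half of the equivalence.

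\emph{Main obstacle.} The delicate step is the global boundedness / no-finite-escape claim in the first assertion: because $\mathbold{F}$ is only locally Lipschitz, one must close the loop between ``the trajectory stays in a compact set'' and ``the composite Lyapunov function is bounded,'' and in particular rule out unbounded growth of the adaptive gain $\phi$ — the standard but technical heart of high-gain adaptive designs. It is precisely here that the monotone update \eqref{phidot} and the appearance of $\hat{\mathbold{\theta}}$ inside the gain $g$ of \eqref{g} (rather than as an additive $\mathbold{F}\hat{\mathbold{\theta}}$ term, cf.\ the Remark) carry the argument. Everything downstream — the $V_1$ estimate, the Barbalat argument, the Hurwitz chain $\dot{\mathbold{E}}=\mathcal{A}\mathbold{E}+\mathcal{B}\mathbold{y}$, the termwise vanishing of $\mathbold{u}'$, and the finite-dimensional algebra at the equilibrium in the second assertion — is routine bookkeeping.
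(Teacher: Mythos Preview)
Your error coordinates and the derivation of the $(\tilde{\mathbold{z}},\tilde{\mathbold{\theta}})$ and $\dot{\mathbold{y}}$ equations are correct and in fact tidier than the paper's intermediate steps (your $-k\tilde{\mathbold{z}}$ is precisely the paper's $\mathbold{w}(t)=\dot{\mathbold{z}}+\mathbold{F}(\mathbold{\xi})\tilde{\mathbold{\theta}}$). The $V_1$ estimate, the Barbalat argument, the Hurwitz chain $\dot{\mathbold{E}}=\mathcal{A}\mathbold{E}+\mathcal{B}\mathbold{y}$, and the termwise vanishing of $\mathbold{u}'$ are all essentially as in the paper.

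The gap is exactly where you flag it, but the paper closes it with two specific devices you do not invoke. First, boundedness of $\mathbold{\xi}$ is \emph{not} extracted from a single composite Lyapunov function; instead the paper argues componentwise on $y_k$. When $|y_k|>\epsilon$ one has $\mathrm{sat}(y_k/\epsilon)=y_k/|y_k|$, and then by the very definition of $g$ in \eqref{g} the combination $-\mathfrak{F}_k+g\,\mathrm{sat}(y_k/\epsilon)$ equals $\beta_k(t)y_k$ for some $\beta_k>0$; substituting this into the integrated expression for $\tilde z_k$ yields an ISS-type bound on $y_k$ driven by the already-bounded $\tilde z_k$ and $\mathbold{F}(\mathbold{\xi}^r)\tilde{\mathbold{\theta}}$, hence $\mathbold{\xi}$, $\mathbold{F}(\mathbold{\xi})$, $\dot{\mathbold{z}}$ bounded. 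Second, to bound $\phi$ the paper uses $V'=\tfrac12\|\mathbold{y}\|^2+\tfrac{1}{2\gamma}(\phi-\phi^*)^2$ with a fixed $\phi^*>\tfrac{\varrho}{2}+\ell_1\ell_2\|\mathbold{\theta}\|$: the $\dot\phi$ contribution cancels $-\phi\|\mathbold{y}\|^2$ exactly and leaves a \emph{constant} damping $-\phi^*\|\mathbold{y}\|^2$ that dominates the Lipschitz term. Your composite function omits the $(\phi-\phi^*)^2$ term, so it does not bound $\phi$, and the step ``$\phi$ nondecreasing and bounded $\Rightarrow\mathbold{y}\in L^2$'' is circular as written. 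Your inequality $\mathbold{y}^\top\mathrm{sat}(\mathbold{y}/\epsilon)\ge\|\mathbold{y}\|-p\epsilon$ is correct but leaves a residual $g\,p\epsilon$ that still depends on $\mathbold{\xi}$ through $g$, so it does not close the boundedness loop either; the paper simply drops $-g\,\mathbold{y}^\top\mathrm{sat}(\mathbold{y}/\epsilon)\le0$ in the $\dot V'$ estimate, having secured boundedness of $\mathbold{\xi}$ beforehand.

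One minor slip in your second assertion: at an equilibrium of the tracking-error system with $\mathbold{e}$ constant one has $\mathbold{x}^{(n)}=\mathbold{x}^{r(n)}$, not $\mathbf{0}_p$, and $\dot{\mathbold{z}}=0$ reads $\mathbold{x}^{r(n)}=\mathbold{F}(\mathbold{\xi}^r)\hat{\mathbold{\theta}}+\mathbold{\sigma}$, not $\mathbold{F}(\mathbold{\xi}^r)\hat{\mathbold{\theta}}+\mathbold{\sigma}=\mathbf{0}_p$. Carrying $\mathbold{x}^{r(n)}$ through your algebra still gives $k\tilde{\mathbold{z}}=-\mathbold{F}(\mathbold{\xi}^r)\tilde{\mathbold{\theta}}$ and hence $\mathbold{d}\equiv\mathbf{0}_p$, so the contradiction survives; the paper reaches the same conclusion more directly by reading off from the $\dot{\mathbold{y}}$ equation that $\mathbold{y}=\mathbf{0}_p$ cannot be an equilibrium when $\mathbold{\Delta}\not\equiv\mathbf{0}_p$.
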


\begin{proof} The proof of the theorem is presented in two parts. In the first part, we consider the case when $\mathbold{\xi}^r(t)=\mathbold{\xi}^*(t)$, and in the second part the case when $\mathbold{\xi}^r(t)$ is not a natural response of the system to $\mathbold{\sigma}(t)$ is considered (i.e., $\mathbold{\xi}^r(t) \neq \mathbold{\xi}^*(t)$).

\textbf{Part 1:}
We prove the first part of the theorem in two steps. In the first step, we show that for $\mathbold{\xi}^r(t)=\mathbold{\xi}^*(t)$,
$\mathbold{z}(t)$ remains bounded and converges to $\mathbold{x}^{*(n-1)}(0)$, and  $\hat{\mathbold{\theta}}(t)$, $\mathbold{\xi}(t)$, $\mathbold{F}(\mathbold{\xi}(t))$, and $\dot{\mathbold{z}}(t)$ remain bounded. Then, in the next step, the achievement of the objective \eqref{obj} along with the boundedness of $\phi(t)$ is investigated. Then, we conclude that $\mathbold{u}'(t)$ remains bounded and converges to zero.\\
Step 1--By considering  the definition of $\mathbold{z}(t)$ in \eqref{z} and since $\mathbold{u}(t)=\mathbold{\sigma}(t)+\mathbold{u}'(t)$, along \eqref{system} one gets
\begin{equation}\label{zdot}
   \dot{\mathbold{z}}(t)=\mathbold{F}(\mathbold{\xi}(t))\mathbold{\theta}+\mathbold{u}'(t)-\mathbold{F}(\mathbold{\xi}(t))\hat{\mathbold{\theta}}(t)-\mathbold{\eta}(t).
\end{equation}
To analyze \eqref{zdot} under the control strategy \eqref{uprime}, we consider the following Lyapunov candidate:
\begin{equation}\label{V}
  V(t)=\frac{1}{2}\widetilde{\mathbold{z}}^\top(t) \widetilde{\mathbold{z}}(t) +\frac{1}{2}\widetilde{\mathbold{\theta}}^\top(t) \mathbold{S}^{-1} \widetilde{\mathbold{\theta}}(t),
\end{equation}
where $\widetilde{\mathbold{z}}(t)=\mathbold{z}(t)-\mathbold{x}^{r(n-1)}(0)$ and $\widetilde{\mathbold{\theta}}(t)=\hat{\mathbold{\theta}}(t)-\mathbold{\theta}$. Since $\mathbold{\theta}$ is constant, the time-derivation of $V(t)$ along  \eqref{uprime}, \eqref{thetahatdot}, and \eqref{zdot}, after some simplification yields
\begin{equation}\label{Vdot}
  \dot{V}(t)= -k\widetilde{\mathbold{z}}^\top(t)\widetilde{\mathbold{z}}(t).
\end{equation}
From \eqref{V} and \eqref{Vdot}, it follows that $\widetilde{\mathbold{z}}(t)$, $\mathbold{z}(t)$, $\widetilde{\mathbold{\theta}}(t)$, and $\hat{\mathbold{\theta}}(t)$ remain bounded.
Since  $\mathbold{\xi}^r(t)=\mathbold{\xi}^*(t)$, by considering \eqref{z} and \eqref{xstarnm1} and according to the definition of $\mathbold{e}(t)$, one gets
\begin{equation}\label{ztilde}
 \widetilde{\mathbold{z}}(t)=\mathbold{e}^{(n-1)}(t)-\int_{0}^{t}\Big(\mathbold{F}(\mathbold{\xi}(\tau))\hat{\mathbold{\theta}}(\tau)-\mathbold{F}(\mathbold{\xi}^r(\tau))\mathbold{\theta}+ \mathbold{\eta}(\tau)\Big)d\tau.
\end{equation}
By adding and subtracting $\int_{0}^{t}\mathbold{F}(\mathbold{\xi}^r(\tau))\hat{\mathbold{\theta}}(\tau)d\tau$ to the right hand side of \eqref{ztilde} and according to the definition of $\widetilde{\mathbold{\theta}}(t)$, one observes that
\begin{eqnarray}\label{ztilde2}
 \widetilde{\mathbold{z}}(t)&=&\mathbold{e}^{(n-1)}(t)-\int_{0}^{t}\Big(\mathbold{F}(\mathbold{\xi}^r(\tau))\widetilde{\mathbold{\theta}}(\tau)+\big(\mathbold{F}(\mathbold{\xi}(\tau))-\mathbold{F}(\mathbold{\xi}^r(\tau))\big)\hat{\mathbold{\theta}}(\tau) +\mathbold{\eta}(\tau)\Big)d\tau.
\end{eqnarray}
 By substituting $\mathbold{\eta}(t)$ defined in \eqref{eta} into \eqref{ztilde2}, and according to the definition of $\mathbold{y}(t)$ in \eqref{y} one gets
\begin{eqnarray}\label{ztilde3}
 \widetilde{\mathbold{z}}(t)&=&\mathbold{y}(t)-\int_{0}^{t}\Big(\mathbold{F}(\mathbold{\xi}^r(\tau))\widetilde{\mathbold{\theta}}(\tau)+\big(\mathbold{F}(\mathbold{\xi}(\tau))-\mathbold{F}(\mathbold{\xi}^r(\tau))\big)\hat{\mathbold{\theta}}(\tau) -\phi(\tau) \mathbold{y}(\tau) \nonumber\\&-&g(\mathbold{\xi}(\tau),\mathbold{\xi}^r(\tau),\hat{\mathbold{\theta}}(\tau))\mathrm{sat}(\mathbold{y}(\tau)/\epsilon)\Big)d\tau-\lambda_{n-1}\mathbold{e}^{(n-2)}(0)-\cdots-\lambda_1\mathbold{e}(0).
\end{eqnarray}
 Now, by decomposing the $\mathbold{y}(t)$ into its $p$ entries as
 \begin{equation*}
 \mathbold{y}(t)= \begin{bmatrix}
                   y_1(t) & y_2(t) & \cdots & y_p(t)
                 \end{bmatrix}^\top,
\end{equation*}
we consider two cases. The first is when $|y_k(t)|\leq \epsilon,\forall k\in\{1,2,\ldots,p\}$, and the second is when $|y_k(t)|> \epsilon$ for some $k\in\{1,2,\ldots,p\}$. These two cases are discussed below:
\begin{itemize}
  \item [] Case 1: If $|y_k(t)|\leq \epsilon,\forall k\in\{1,2,\ldots,p\},$ according to the definition of $\mathbold{y}(t)$ in \eqref{y} and since the polynomial $\mathrm{pol}(\lambda_i,n-1)$ is Hurwitz stable, $\mathbold{e}^{(k)}(t),k\in\{0,1,\ldots,n-1\},$ are bounded (due to input-to-state stability of Hurwitz linear systems \cite{Khalil}). In this condition, since $\mathbold{\xi}^r(t)=\begin{bmatrix}
                  \mathbold{x}^{r(n-1)\top }(t)  & \ldots &   \dot{\mathbold{x}}^{r\top}(t)  &   \mathbold{x}^{r\top}(t)
                 \end{bmatrix}^\top$ is bounded,  $\mathbold{\xi}(t)=\begin{bmatrix}
                  \mathbold{x}^{(n-1)\top }(t)  & \ldots &   \dot{\mathbold{x}}^{\top}(t)  &   \mathbold{x}^{\top}(t)
                 \end{bmatrix}^\top$ is bounded as well. Then, as $\mathbold{F}(\cdot)$ is a locally Lipschitz function, the boundedness of $\mathbold{\xi}(t)$ implies the boundedness of $\mathbold{F}(\mathbold{\xi}(t))$.
     By considering  \eqref{uprime} and \eqref{zdot}, we have
   \begin{equation}\label{zdot2}
   \dot{\mathbold{z}}(t)=\mathbold{F}(\mathbold{\xi}(t))\mathbold{\theta}-k\widetilde{\mathbold{z}}(t)-\mathbold{F}(\mathbold{\xi}(t))\hat{\mathbold{\theta}}(t).
\end{equation}
Hence, as $\mathbold{F}(\mathbold{\xi}(t))$, $\hat{\mathbold{\theta}}(t)$, and $\widetilde{\mathbold{z}}(t)$ are bounded, from \eqref{zdot2}
     the boundedness of $\dot{\mathbold{z}}(t)$ can be concluded.

  \item [] Case 2: If there exist $ k\in\{1,2,\ldots,p\}$ such that   $|y_k(t)|> \epsilon$, according to the definition of the saturation function,  $\mathrm{sat}(y_k(t)/\epsilon)=y_k(t)/|y_k(t)|$. Let us decompose $\big(\mathbold{F}(\mathbold{\xi}(t))-\mathbold{F}(\mathbold{\xi}^r(t))\big)\hat{\mathbold{\theta}}(t)$ to $p$ entries as follows:
    \begin{equation*}
  \big(\mathbold{F}(\mathbold{\xi}(t))-\mathbold{F}(\mathbold{\xi}^r(t))\big)\hat{\mathbold{\theta}}(t)=\begin{bmatrix}
 \mathfrak{F}_1(t) & \mathfrak{F}_2(t) & \ldots & \mathfrak{F}_p(t)
 \end{bmatrix}^\top.
    \end{equation*}
By considering \eqref{g} and according to the definition of $\mathfrak{F}_k(t)$, $\mathrm{sat}(y_k(t)/\epsilon)=y_k(t)/|y_k(t)|$ implies that there exists $\beta_k(t)\in \mathbb{R}_{>0}$, such that
      \begin{equation}\label{betakyk}
        -\mathfrak{F}_k(t)+g(\mathbold{\xi}(t),\mathbold{\xi}^r(t),\hat{\mathbold{\theta}}(t))\mathrm{sat}(y_k(t)/\epsilon)=\beta_k(t) y_k(t).
      \end{equation}
Now, let us decompose $\widetilde{\mathbold{z}}(t)$, $\mathbold{F}(\mathbold{\xi}^r(t))\widetilde{\mathbold{\theta}}(t)$, and $\mathbold{e}(t)$ as follows:
 \begin{equation*}
 \begin{split}
\widetilde{ \mathbold{z}}(t)&= \begin{bmatrix}
                   \widetilde{z}_1(t) & \widetilde{z}_2(t) & \cdots & \widetilde{z}_p(t)
                 \end{bmatrix}^\top\\
   \mathbold{F}(\mathbold{\xi}^r(t))\widetilde{\mathbold{\theta}}(t)&=\begin{bmatrix}
  \mathfrak{F}'_1(t) & \mathfrak{F}'_2(t) & \ldots & \mathfrak{F}'_p(t)
  \end{bmatrix}^\top\\
 \mathbold{e}(t)&= \begin{bmatrix}
                   e_1(t) & e_2(t) & \cdots & e_p(t)
                 \end{bmatrix}^\top.
\end{split}
\end{equation*}
By considering \eqref{betakyk}, from \eqref{ztilde3} one gets
\begin{eqnarray}
 y_k(t)&=&\int_{0}^{t}\Big(\mathfrak{F}'_k(\tau)-\beta_k(\tau) y_k(\tau)-\phi(\tau) y_k(\tau)\Big)d\tau \nonumber \\&+&\lambda_{n-1}e_k^{(n-2)}(0)+\cdots+\lambda_1e_k(0)\label{zktilde}+\widetilde{z}_k(t).
\end{eqnarray}
Since  $\mathfrak{F}'_k(t)$ and $ \widetilde{z}_k(t)$ are bounded and $\beta_k(t)+\phi(t)$ is positive, by considering \eqref{zktilde}, it follows that $y_k(t)$ remains bounded. Hence, based on the arguments given for Case 1, the boundedness of $\mathbold{\xi}(t)$ and $\mathbold{F}(\mathbold{\xi}(t))$ and then the boundedness of  $\dot{\mathbold{z}}(t)$ can be concluded.
\end{itemize}
According to the aforementioned two cases, $\mathbold{\xi}(t)$, $\mathbold{F}(\mathbold{\xi}(t))$, and $\dot{\mathbold{z}}(t)$ are bounded.
By considering \eqref{Vdot},
$\dot{V}(t)$ is negative semidefinite. Since $V(t)$ is lower bounded and $\dot{V}(t)$ is nonpositive, the limit of $V(t)$ exists, and $V(t)$ converges to  a constant value. The convergence of $V(t)$ to a constant value means that in the time period $[t,t+s],t\to \infty,$ for any real positive constant  $s$, we should have

\begin{equation}\label{Vdotint}
	\lim_{t\rightarrow \infty}\int_{t}^{t+s} \dot{V}(\tau)d\tau=0.
\end{equation}
Equation \eqref{Vdotint} means that $\dot{V}(t)$ converges to zero in the time period $[t,t+s],t\to \infty,$ unless $\ddot{V}(t)$ is unbounded for bounded $\dot{V}(t)$, but such discontinuity is not possible according to \eqref{zdot2}\footnote{This can be also concluded by the Barbalat lemma \cite{TaoTAC:97} and the invariance principle for nonautonomous systems \cite{BarkanaIJC:14}.}.
Therefore, $\dot{V}(t)$ converges to zero which implies that $\widetilde{\mathbold{z}}(t)$ converges to zero.\\ 
Step 2--From \eqref{ztilde3} one gets
\begin{eqnarray}\label{ztilde4}
 \dot{\mathbold{y}}(t)&=&\big(\mathbold{F}(\mathbold{\xi}(t))-\mathbold{F}(\mathbold{\xi}^r(t))\big)\hat{\mathbold{\theta}} -\phi(t) \mathbold{y}(t) -g(\mathbold{\xi}(t),\mathbold{\xi}^r(t),\hat{\mathbold{\theta}}(t))\mathrm{sat}(\mathbold{y}(t)/\epsilon)\nonumber \\&+&\dot{\mathbold{z}}(t)+\mathbold{F}(\mathbold{\xi}^r(t))\widetilde{\mathbold{\theta}}(t).
\end{eqnarray}
Since $\widetilde{\mathbold{z}}(t)$  converges to zero,
for any $s \in \mathbb{R}_{>0}$,
\begin{equation}\label{limdz}
	\lim_{t\rightarrow \infty}\int_{t}^{t+s} \dot{\mathbold{z}}(\tau)d\tau=0.
\end{equation}
Equation \eqref{limdz} means that $\dot{\mathbold{z}}(t)$ converges to zero in the time period $[t,t+s],t\to \infty,$ unless $\ddot{\mathbold{z}}(t)$ is unbounded for bounded $\dot{\mathbold{z}}(t)$, but such discontinuity is not possible according to  the nonlinear system \eqref{zdot2}.
Hence, the convergence of $\dot{\mathbold{z}}(t)$ to zero can be concluded.
Since $\dot{\mathbold{z}}(t)$ and $\widetilde{\mathbold{z}}(t)$ remain bounded and converge to zero, from \eqref{zdot2} it follows that $\mathbold{F}(\mathbold{\xi}(t))\widetilde{\mathbold{\theta}}(t)$ remains bounded and converges to zero. By adding and subtracting $\mathbold{F}(\mathbold{\xi}(t))\widetilde{\mathbold{\theta}}(t)$ to the right side of \eqref{ztilde4} and since $\widetilde{\mathbold{\theta}}(t)=\hat{\mathbold{\theta}}(t)-\mathbold{\theta}$, after some manipulation one gets
\begin{eqnarray}\label{ztilde5}
 \dot{\mathbold{y}}(t)=\big(\mathbold{F}(\mathbold{\xi}(t))-\mathbold{F}(\mathbold{\xi}^r(t))\big)\mathbold{\theta} -\phi(t) \mathbold{y}(t) -g(\mathbold{\xi}(t),\mathbold{\xi}^r(t),\hat{\mathbold{\theta}}(t))\mathrm{sat}(\mathbold{y}(t)/\epsilon)+\mathbold{w}(t),
\end{eqnarray}
where $\mathbold{w}(t)=\dot{\mathbold{z}}(t)+\mathbold{F}(\mathbold{\xi}(t))\widetilde{\mathbold{\theta}}(t)$ is bounded and converges to zero.
As $\mathbold{\xi}(t)$ and $\mathbold{\xi}^r(t)$ are bounded and $\mathbold{F}(\cdot)$ is locally Lipschitz, there exists a finite constant $\ell_1\in \mathbb{R}_{>0}$ such that
\begin{equation}\label{ineq1}
 \big \|\mathbold{F}(\mathbold{\xi}(t))-\mathbold{F}(\mathbold{\xi}^r(t))\big\|\leq \ell_1 \|\mathbold{\xi}(t)-\mathbold{\xi}^r(t)\|.
\end{equation}
Moreover, according to \eqref{y}, since the polynomial $\mathrm{pol}(\lambda_i,n-1)$ is Hurwitz stable, there exists a vanishing bounded function $h\left(t,\mathbold{e}(0),\ldots,\mathbold{e}^{(n-2)}(0)\right)$ and a finite constant
$\ell_2\in \mathbb{R}_{>0}$
such that \cite{Khalil}
\begin{equation}\label{ineq2}
 \|\mathbold{\xi}(t)-\mathbold{\xi}^r(t)\|\leq h+\ell_2\|\mathbold{y}(t)\|.
\end{equation}
Hence, by considering \eqref{ineq1} and \eqref{ineq2}, one gets
\begin{equation}\label{ineq3}
	\big \|\mathbold{F}(\mathbold{\xi}(t))-\mathbold{F}(\mathbold{\xi}^r(t))\big\|\leq \ell_1h+\ell_1\ell_2\|\mathbold{y}(t)\|.
\end{equation}
Now, we consider the following Lyapunov candidate:
\begin{equation*}
  V'(t)=\frac{1}{2}\mathbold{y}^\top(t)\mathbold{y}(t) +\frac{1}{2}\gamma^{-1}(\phi(t)-\phi^*)^2,
\end{equation*}
where
$\phi^* \in \mathbb{R}_{>0}$ is a constant that satisfies the following inequality for an arbitrary $\varrho \in \mathbb{R}_{>0}$,
\begin{equation}\label{phistar}
  \phi^* > \frac{\varrho}{2}+\ell_1\ell_2\|\mathbold{\theta}\|.
\end{equation}
The time derivation of $V'(t)$ along \eqref{phidot} and \eqref{ztilde5} yields
\begin{eqnarray}\label{V2dot}
  \dot{V}'(t)&=&\mathbold{y}^\top(t)\big(\mathbold{F}(\mathbold{\xi}(t))-\mathbold{F}(\mathbold{\xi}^r(t))\big)\mathbold{\theta} -\phi(t) \mathbold{y}^\top(t)\mathbold{y}(t) - g(\mathbold{\xi}(t),\mathbold{\xi}^r(t),\hat{\mathbold{\theta}}(t))\mathbold{y}^\top(t)\mathrm{sat}(\mathbold{y}(t)/\epsilon)\nonumber\\&+& \mathbold{y}^\top(t)\mathbold{w}(t)+(\phi(t)-\phi^*)\mathbold{y}^\top(t)\mathbold{y}(t).
\end{eqnarray}
According to the Young inequality, one observes that
\begin{equation}\label{Young}
	\begin{split}
		\mathbold{y}^\top(t)\mathbold{w}(t)&\leq \frac{\varrho}{2}\mathbold{y}^\top(t)\mathbold{y}(t) + \frac{1}{2\varrho}\mathbold{w}^\top(t)\mathbold{w}(t).
	\end{split}
\end{equation}
By considering \eqref{ineq3} and \eqref{Young},   \eqref{V2dot} can be simplified as follows:
\begin{equation}\label{V2dot3}
	\dot{V}'(t)\leq \ell_1h\|\mathbold{\theta}\|\|\mathbold{y}(t)\|+\ell_1\ell_2 \|\mathbold{\theta}\|\mathbold{y}^\top(t)\mathbold{y}(t)+\frac{\varrho}{2}\mathbold{y}^\top(t)\mathbold{y}(t) + \frac{1}{2\varrho}\mathbold{w}^\top(t)\mathbold{w}(t)
	-\phi^* \mathbold{y}^\top(t)\mathbold{y}(t),
\end{equation}
where $\ell_1h\|\mathbold{\theta}\|\|\mathbold{y}(t)\|$ is bounded and converges to zero. Note that as  $\mathbold{\xi}(t)$ and $\mathbold{\xi}^r(t)$ are bounded, $\mathbold{y}(t)$ is also bounded.
From  \eqref{phistar} and \eqref{V2dot3}, it follows that there exists $\chi \in \mathbb{R}_{>0}$ such that
\begin{equation}\label{V2dot4}
  \dot{V}'(t)\leq - \chi\mathbold{y}^\top(t) \mathbold{y}(t)  +\ell_1h\|\mathbold{\theta}\|\|\mathbold{y}(t)\|+\frac{1}{2\varrho}\mathbold{w}^\top(t)\mathbold{w}(t).
\end{equation}
Since $\ell_1h\|\mathbold{\theta}\|\|\mathbold{y}(t)\|+1/(2\varrho)\mathbold{w}^\top(t)\mathbold{w}(t)$ is bounded and converges to zero, according to \eqref{V2dot4},  for any nonzero $\mathbold{y}(t)$ there exists a finite time $t_f$ such that for $t\geq t_f$,  $\dot{V}'(t)$ is negative semidefinite. Note that according to  \eqref{phidot} and \eqref{ztilde5}, $V'(t)$ cannot become unbounded in finite time. Therefore, as  $\dot{V}'(t)$ is negative semidefinite for $t\geq t_f$, $V'(t)$ is bounded at all times.
Hence, the limit of $V'(t)$ exists, that is, for any $s \in \mathbb{R}_{>0}$,
\begin{equation}\label{V2dotint}
	\lim_{t\rightarrow \infty}\int_{t}^{t+s} \dot{V}'(\tau)d\tau=0.
\end{equation}
Equation \eqref{V2dotint} means that $\dot{V}'(t)$ converges to zero in the time period $[t,t+s],t\to \infty,$ unless $\ddot{V}'(t)$ is unbounded for bounded $\dot{V}'(t)$ which is not possible according to \eqref{phidot} and \eqref{ztilde5}.
Hence, $\dot{V}'(t)$ converges to zero which implies that $\mathbold{y}(t)$ converges to zero.
According to \eqref{y}, by convergence of $\mathbold{y}(t)$  to zero, since the polynomial $\mathrm{pol}(\lambda_i,n-1)$ is Hurwitz stable, $\mathbold{e}^{(k)}(t),k\in\{0,1,\ldots,n-1\},$ converge to zero. Therefore,
the objective \eqref{obj} is satisfied. Moreover, since $V'(t)$ is bounded, $\phi(t)$ remains bounded. Hence, as $\mathbold{z}(t)$, $\mathbold{y}(t)$, and $\mathbold{\xi}(t)$ are bounded, $\mathbold{u}'(t)$ is bounded. Now, we need to show that $\mathbold{u}'(t)$ converges to zero. We have shown that $\mathbold{e}^{(k)}(t),k\in\{0,1,\ldots,n-1\},$ and $\mathbold{y}(t)$ converge to zero.
                It was also shown that  $\widetilde{\mathbold{z}}(t)$ converges to zero.
                Therefore, according to \eqref{uprime} and \eqref{eta}, $\mathbold{u}'(t)$ converges to zero.
                
\textbf{Part 2:}
 If $\mathbold{\xi}^r(t)$ is not a  response of the system to $\mathbold{\sigma}(t)$, from \eqref{systemopenloop} it follows that
\begin{equation}\label{systemopenloop2}
  \mathbold{x}^{r(n)}(t)=\mathbold{F}(\mathbold{\xi}^r(t))\mathbold{\theta}+\mathbold{\sigma}(t)+\mathbold{\Delta}(t),
\end{equation}
where $\mathbold{\Delta}(t)\in \mathbb{R}^{p}$ is bounded and not identical to zero.
According to \eqref{systemopenloop2},  one gets
\begin{equation}\label{xstarnm12}
 \mathbold{x}^{r(n-1)}(0)=\mathbold{x}^{r(n-1)}(t)-\int_{0}^{t}\Big(\mathbold{F}(\mathbold{\xi}^r(\tau))\mathbold{\theta}+\mathbold{\Delta}(\tau) +\mathbold{\sigma}(\tau)\Big)d\tau.
\end{equation}
In Part 1, it was show that if $|y_k(t)|\leq \epsilon,\forall k\in\{1,2,\ldots,p\}$, $\mathbold{\xi}(t)$ always remains bounded. Let us now consider the case when $|y_k(t)|> \epsilon$ for some $k\in\{1,2,\ldots,p\}$.
By decomposing $\mathbold{\Delta}(t)$ as
 \begin{equation*}
		\mathbold{\Delta}(t)= \begin{bmatrix}
			\Delta_1(t) & \Delta_2(t) & \cdots & \Delta_p(t)
		\end{bmatrix}^\top,
\end{equation*}
and by considering \eqref{xstarnm12}, \eqref{zktilde} yields
\begin{eqnarray}
	\widetilde{z}_k(t)&=&y_k(t)-\int_{0}^{t}\Big(\mathfrak{F}'_k(\tau)-\beta_k(\tau) y_k(\tau)-\phi(\tau) y_k(\tau)-\Delta_k(\tau)\Big)d\tau \nonumber \\&-&\lambda_{n-1}e_k^{(n-2)}(0)-\cdots-\lambda_1e_k(0)\label{zktilde2}.
\end{eqnarray}
Note that whether $\mathbold{\Delta}(t)$ is identical to zero or not,  \eqref{Vdot} still is satisfied, implying that $\widetilde{\mathbold{z}}(t)$ and $\widetilde{\mathbold{\theta}}(t)$ are bounded, and hence $\widetilde{z}_k(t)$ and $\mathfrak{F}'_k(t)$ are bounded.
Since $ \widetilde{z}_k(t)$, $\mathfrak{F}'_k(t)$, and $\Delta_k(t)$  are bounded and $\beta_k(t)+\phi(t)$ is positive, by considering \eqref{zktilde2},  $y_k(t)$ remains bounded. Hence, the boundedness of $\mathbold{\xi}(t)$ can be concluded.
By considering \eqref{xstarnm12},  \eqref{ztilde5} also becomes
\begin{eqnarray}\label{ztilde52}
	\dot{\mathbold{y}}(t)&=&\big(\mathbold{F}(\mathbold{\xi}(t))-\mathbold{F}(\mathbold{\xi}^r(t))\big)\mathbold{\theta} -\phi(t) \mathbold{y}(t) -g(\mathbold{\xi}(t),\mathbold{\xi}^r(t),\hat{\mathbold{\theta}}(t))\mathrm{sat}(\mathbold{y}(t)/\epsilon)\nonumber\\&-&\mathbold{\Delta}(t)+\mathbold{w}(t).
\end{eqnarray}
Whether $\mathbold{\Delta}(t)$ is identical to zero or not, in a way similar to Part 1, one observes that $\widetilde{\mathbold{z}}(t)$, $\dot{\mathbold{z}}(t)$ and then $\mathbold{w}(t)$ converge to zero. Therefore, if $\mathbold{\Delta}(t)$ is not identical to zero, $\mathbold{y}(t)=\mathbf{0}_p$ is not the equilibrium point of the system \eqref{ztilde52}. Hence, $\mathbold{e}(t)=\mathbf{0}_p$ is not the equilibrium point of the system \eqref{ztilde52}. As a result, since the polynomial $\mathrm{pol}(\lambda_i,n-1)$ is Hurwitz stable, $\mathbold{\eta}(t)=\mathbf{0}_p$ is not the stable equilibrium point of the system. Therefore,  $\mathbold{u}'(t)=\mathbf{0}_p$ does not imply a stable equilibrium for the tracking error system, that is, a stable equilibrium of the tracking error system implies nonzero control input.            
\end{proof}

According to the results of Theorem~\ref{theoremmain}, if the reference signal $\mathbold{\xi}^r(t)$ is a natural response of the system \eqref{system} to the excitation  $\mathbold{\sigma}(t)$, $\mathbold{\xi}(t)$ converges to $\mathbold{\xi}^r(t)$, while $\mathbold{u}'(t)$ is noninvasive. In the context of CBC, the behavior of the system is a priori unknown. Hence, to discover such uncontrolled behavior, the reference signal $\mathbold{\xi}^r(t)$ will be found iteratively by checking whether it makes the control input noninvasive or not. Hence, as addressed in Theorem~\ref{theoremmain}, it is desirable to guarantee that the reference signal results in noninvasive control ``only if'' it is a natural response of the nonlinear system~\eqref{system} to $\mathbold{\sigma}(t)$. In other words, in cases when the reference $\mathbold{\xi}^r(t)$ is not a natural response of the system \eqref{system} to $\mathbold{\sigma}(t)$, $\mathbold{\xi}(t)$ should not converge to $\mathbold{\xi}^r(t)$ and $\mathbold{u}'(t)$ should be invasive. The obtained results are validated by numerical simulation in the next section.

\begin{remark}
According to the proof of Theorem \ref{theoremmain}, while the proposed control strategy relies on the adaptive vector $\hat{\mathbold{\theta}}(t)$, the convergence of $\hat{\mathbold{\theta}}(t)$ to  $\mathbold{\theta}$ is not required. Accordingly, the persistent excitation of $\mathbold{\xi}^*(t)$, which is typically required for $\hat{\mathbold{\theta}}(t)$ to converge to $\mathbold{\theta}$, is not a condition for the accurate performance of the proposed control strategy. However, if $\mathbold{\xi}^*(t)$ is persistently exciting, it is straightforward to show that under the proposed control strategy, while tracking $\mathbold{\xi}^*(t)$, $\hat{\mathbold{\theta}}(t)$ asymptotically converges to $\mathbold{\theta}$.
\end{remark}

\begin{remark}
The control strategy proposed in \eqref{uprime}-\eqref{thetahatdot} is leading the system behavior toward the surface $\mathbold{y}(t)=\mathbf{0}_p$, which according to \eqref{y} implies that the system is controlled by a linear controller. Taking a second-order dynamical system as an example, the behavior of the system on $\mathbold{y}(t)=\mathbf{0}_p$ is thus similar to a system controlled by a proportional (P) or proportional-derivative (PD) controller. This type of linear controllers has been extensively used for CBC as it guarantees the existence of a noninvasive solution for $\mathbold{\xi}^r(t)$. Therefore, it is thought that the iterative methods currently used to find noninvasive reference signals in CBC will also work for the adaptive control strategy proposed in this paper. However, contrary to using P and PD controllers directly on nonlinear systems (as in the literature), the proposed adaptive control strategy has a ``guaranteed stabilizing performance''. Indeed, the surface $\mathbold{y}(t)=\mathbf{0}_p$ does not depend on unknown parameters, and the effect of all the unknown parameters is compensated for when $\lim_{t\rightarrow \infty}\mathbold{y}(t)=\mathbf{0}_p$. As along as $\mathbold{y}(t)=\mathbf{0}_p$, the tracking error $\mathbold{e}(t)$ converges to zero. 
\end{remark}

\section{Simulation Results} \label{SecSimulations}
In this section, the accuracy of the proposed control strategy is numerically demonstrated on three systems. First, a bi-stable Duffing oscillator and then two beam structures exhibiting 1:1 and 3:1 modal interactions, respectively, are considered.

\subsection{Duffing oscillator}
Consider a forced Duffing oscillator with the following dynamics:
\begin{equation*}\label{Ex1Sys}
\ddot{x}(t)=\theta_1\dot{x}(t)+\theta_2x(t)+\theta_3 x(t)^3+\sigma(t)+u'(t),
\end{equation*}
where $x(t)$ denotes the displacement of the oscillator, $\sigma(t)$ is the applied excitation, $u'(t)$ is the control input, and $\theta_1=-0.1$, $\theta_2=4$, and $\theta_3=-2$ are unknown damping, linear stiffness, and nonlinear stiffness parameters, respectively. For illustration, the response of the oscillator under harmonic excitation $\sigma(t)=0.15\cos(\omega t)$ is computed using numerical continuation and shown in Fig.~\ref{Ex1-OpenLoop}-(a). 

\begin{figure}[t]
\centering
\begin{tabular}{cc}
\subfloat[]{\includegraphics[width=0.45\textwidth]{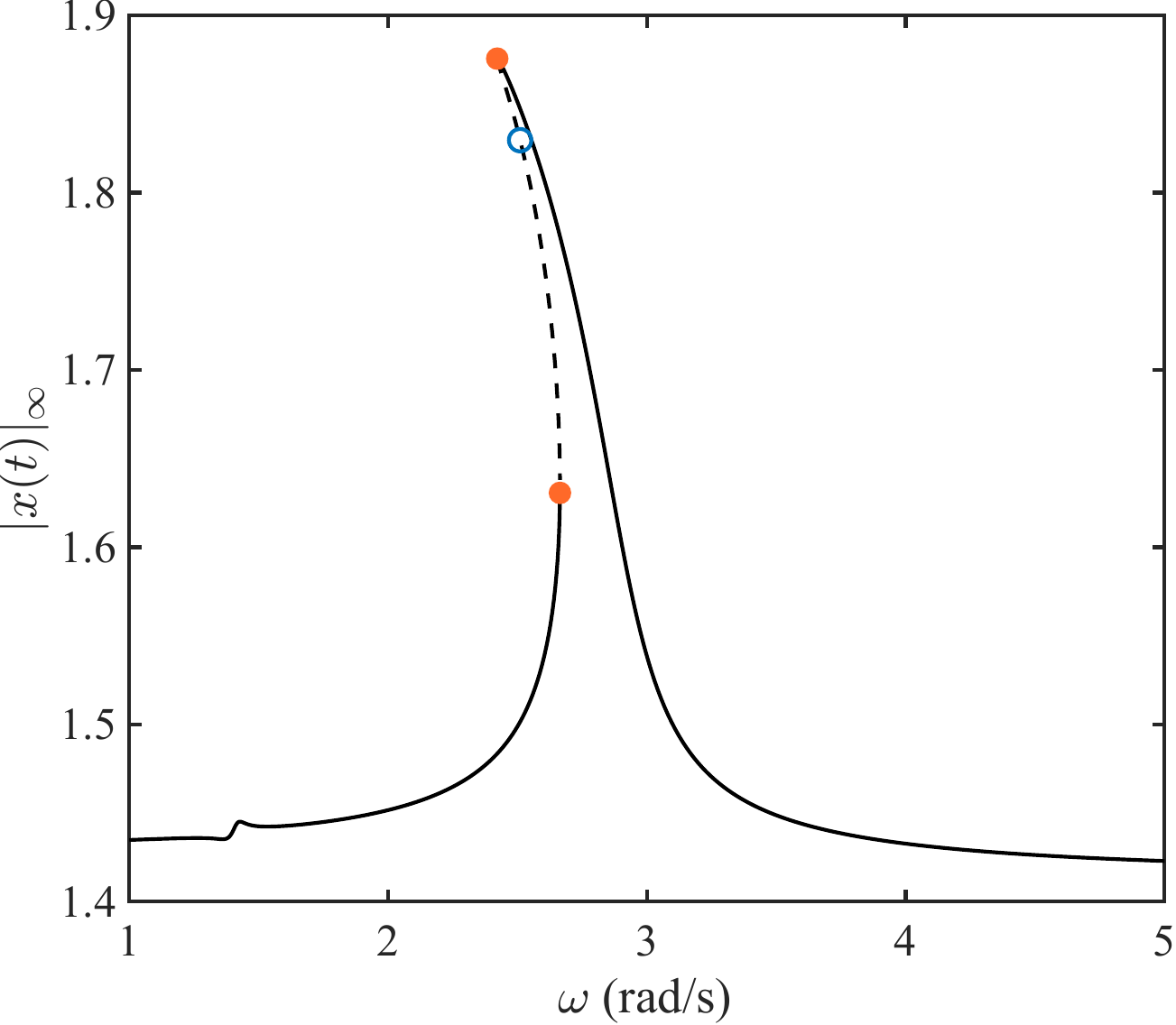}} & 
\subfloat[]{\includegraphics[width=0.45\textwidth]{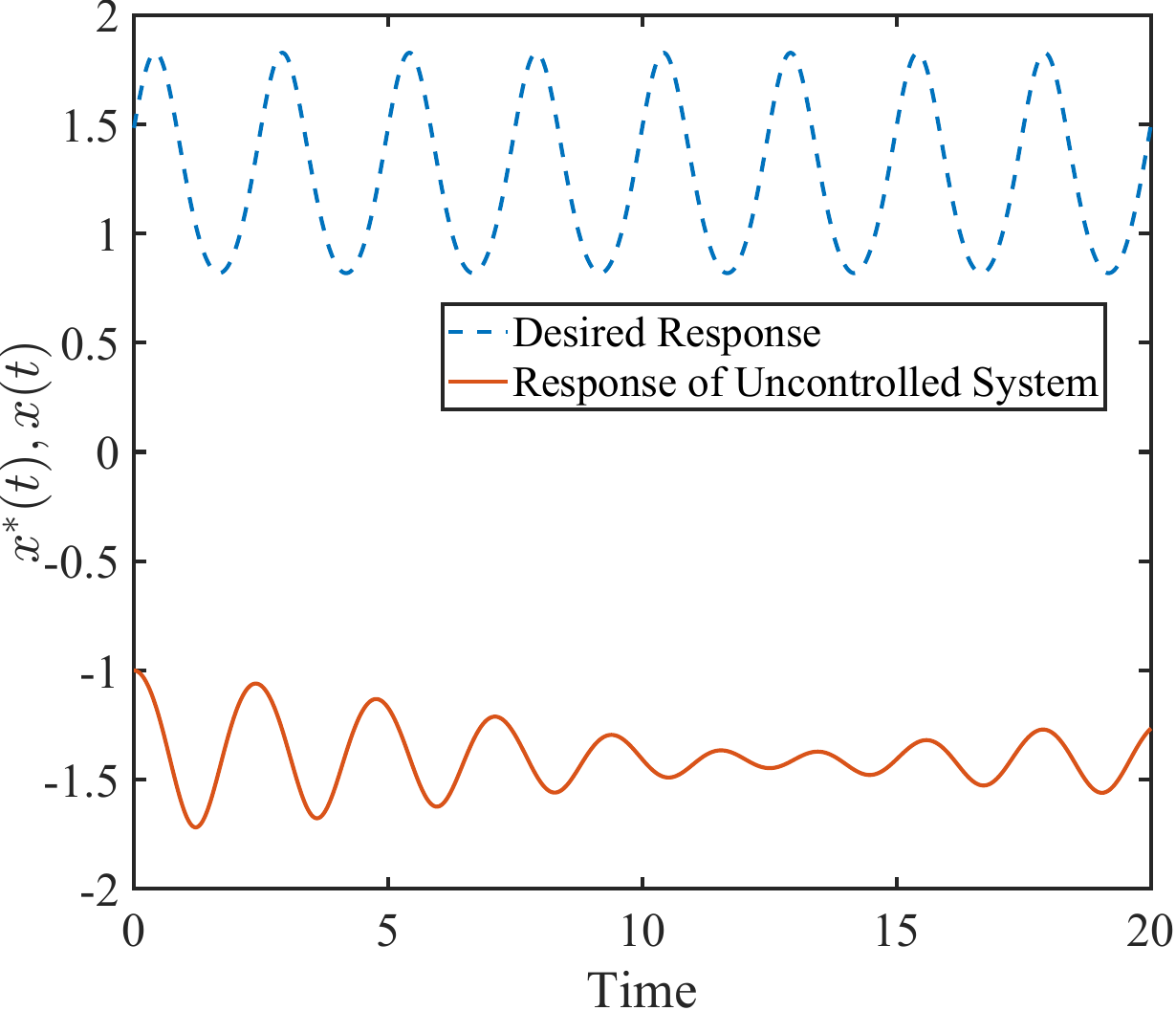}}
\end{tabular}
\caption{(a) Frequency response of the oscillator around the positive equilibrium in which the specific point associated with $\mathbold{\xi}^*(t)$ is shown by a circle ($\textcolor{matlab_blue}{\circ}$). The dashed line represents unstable responses, and dots ($\textcolor{matlab_orange} {\bullet}$) show limit-point bifurcations. (b) Desired responses of the uncontrolled oscillator for the initial states $(\dot{x}(0),x(0))=(1.314,1.483)$  and the uncontrolled response for the initial states $(\dot{x}(0),x(0))=(0,-1)$.}
\label{Ex1-OpenLoop}
\end{figure}

To illustrate the performance of the proposed control method, a reference signal corresponding to an unstable periodic response of the oscillator at $\omega=2.515$ is chosen (see the circle marker in Fig.~\ref{Ex1-OpenLoop}-(a)). This reference trajectory is given by $\mathbold{\xi}^r(t)=\mathbold{\xi}^*(t)=\begin{bmatrix}
	\dot{x}^*(t)  &   x^{*}(t)
\end{bmatrix}^\top$ where
\begin{equation*}
  x^*(t)\approx1.271+0.244\cos(\omega t)-0.026\cos(2\omega t)-0.005\cos(3\omega t)+0.436\sin(\omega t)+ 0.045\sin(2\omega t),
\end{equation*}
and $\dot{x}^*(t)$ is obtained by differentiating $x^*(t)$, and hence $(\dot{x}(0),x(0))=(1.314,1.483)$. The response of the oscillator is very sensitive to the initial states as the oscillator has two equilibria with stable steady-state oscillations around each. The desired response of the oscillator to $\sigma(t)=0.15\cos(\omega t)$ along with the response of the oscillator to $\sigma(t)=0.15\cos(\omega t)$ for the different initial states $(\dot{x}(0),x(0))=(0,-1)$ is shown in Fig.~\ref{Ex1-OpenLoop}-(b). Therefore, without control of the oscillator response, it is apparent that the oscillator response can be significantly different from the desired one. Note that $x^*(t)$ is an unstable trajectory of the uncontrolled oscillator. Therefore, it may not be precisely revealed even if the initial states of the oscillator are close to the initial states of the desired trajectory.
\begin{figure}[t]
\centering
\begin{tabular}{cc}
\subfloat[]{\includegraphics[width=0.45\textwidth]{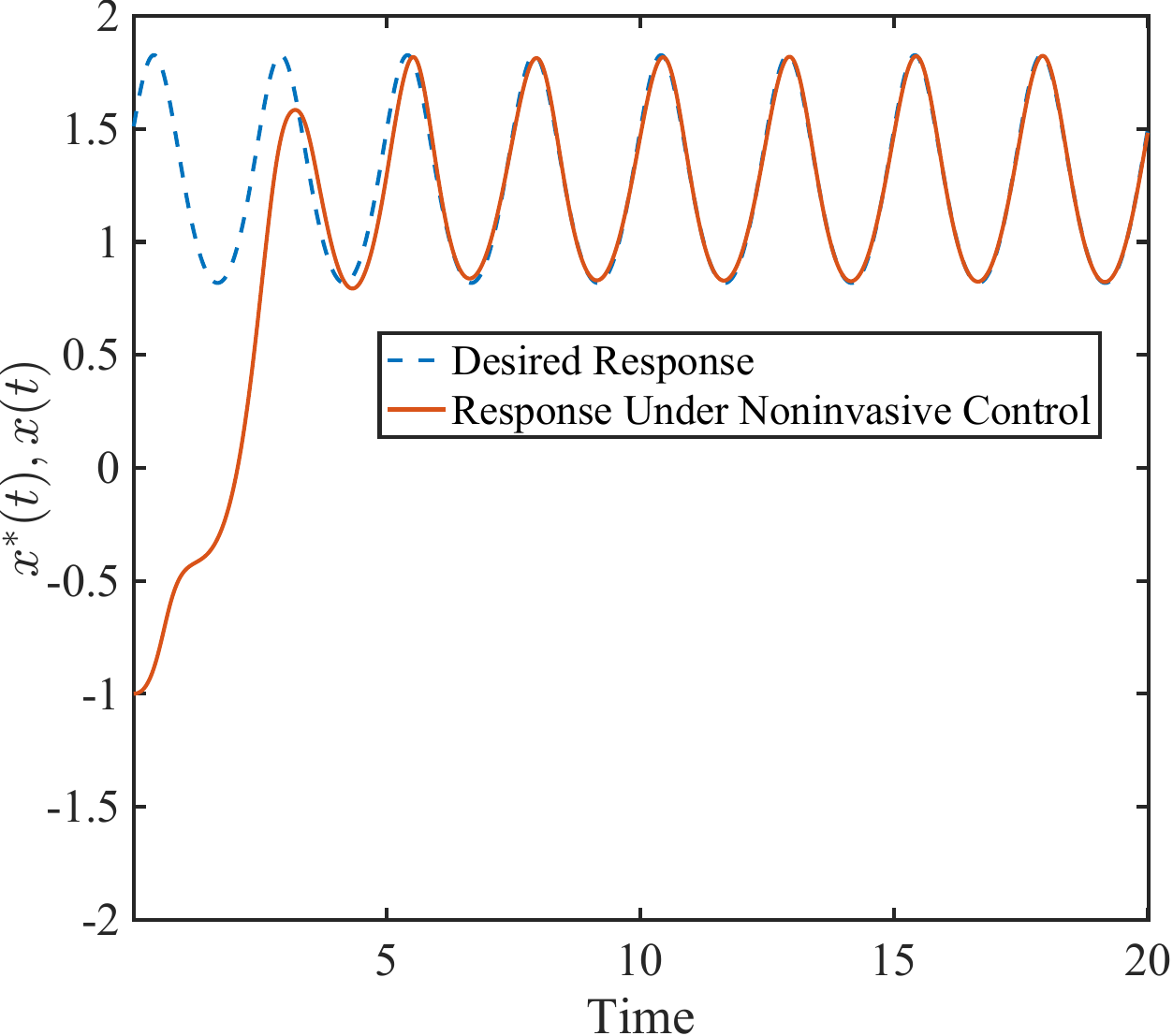}} & 
\subfloat[]{\includegraphics[width=0.44\textwidth]{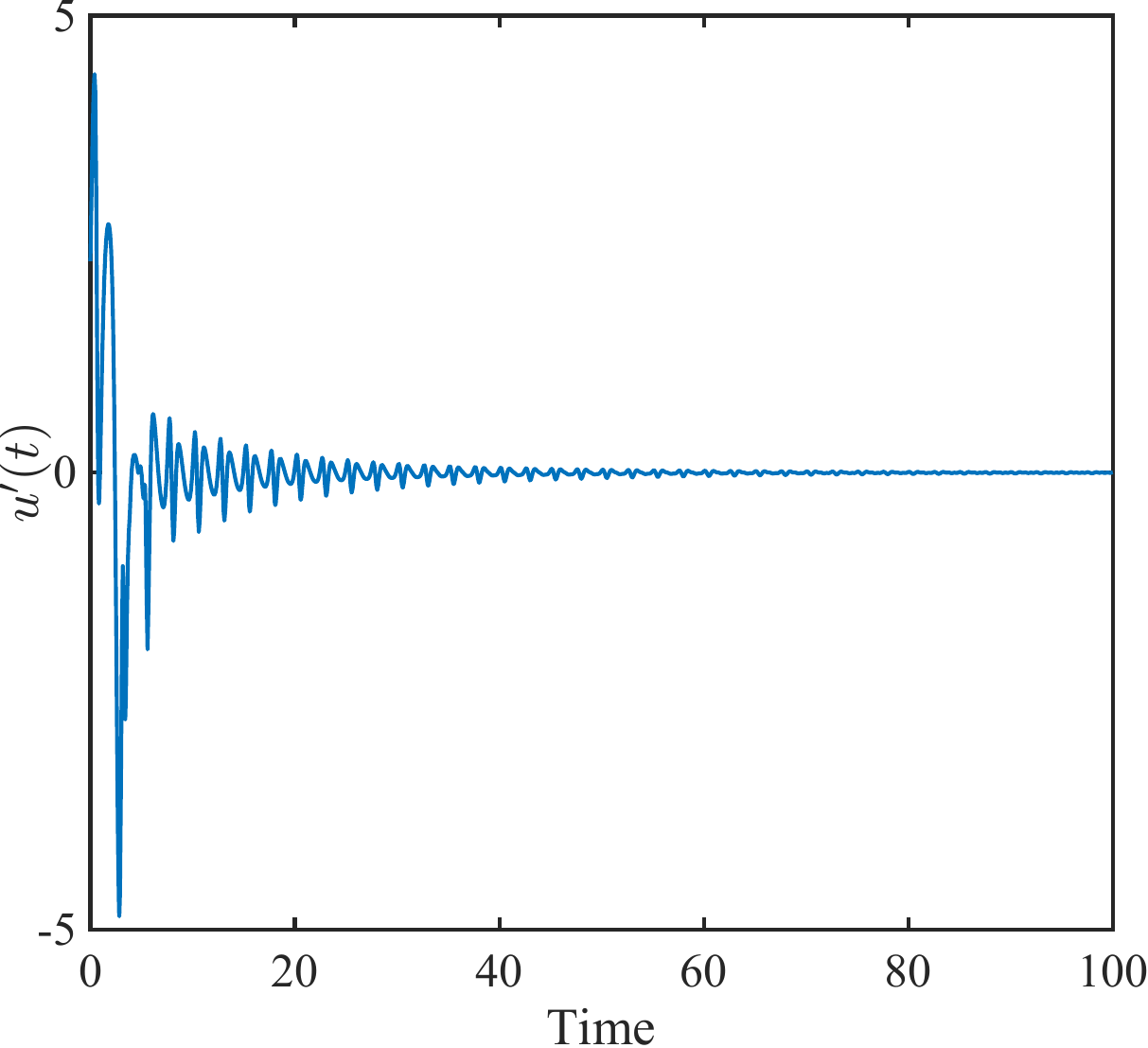}}\\
\subfloat[]{\includegraphics[width=0.45\textwidth]{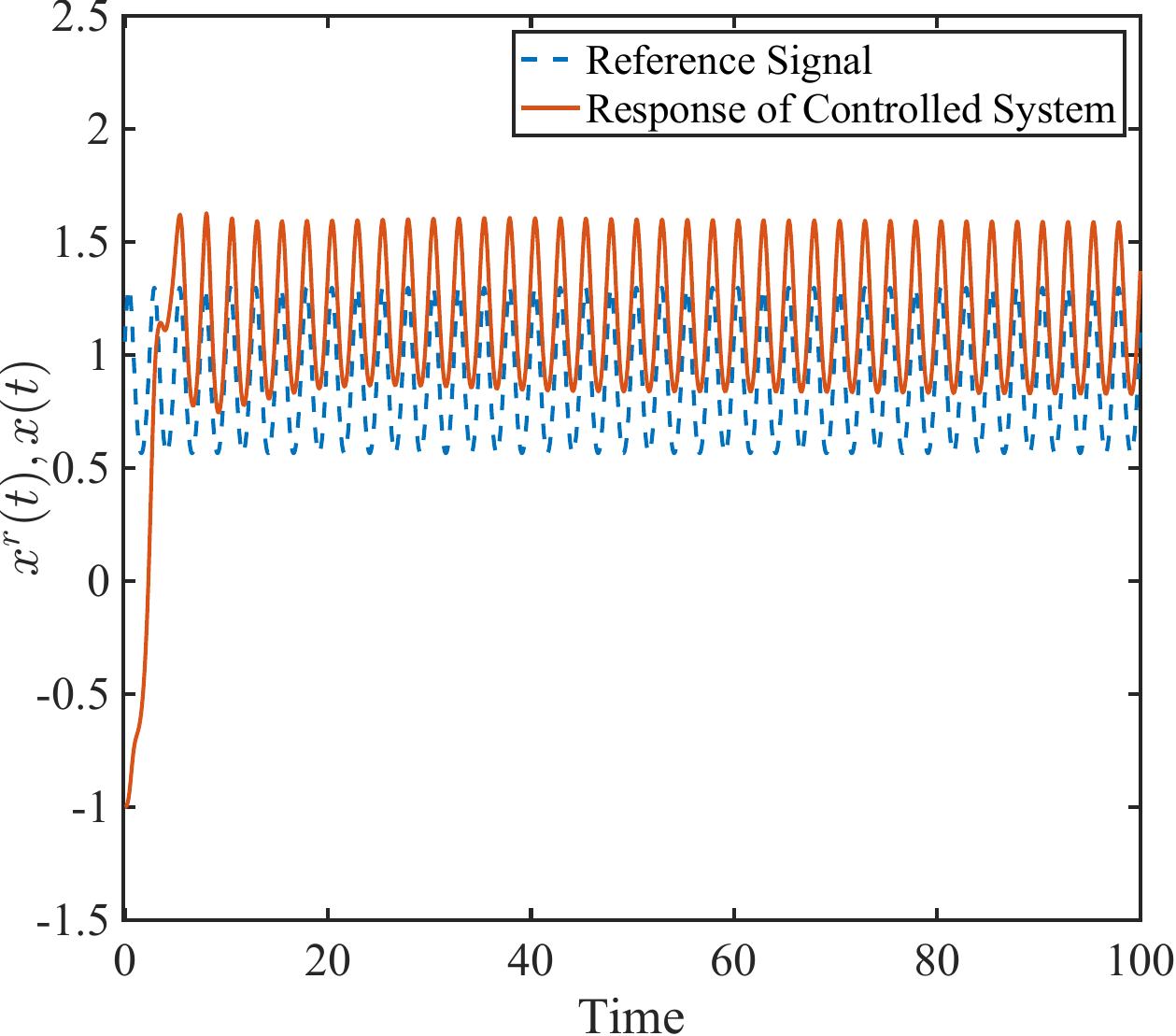}} & 
\subfloat[]{\includegraphics[width=0.445\textwidth]{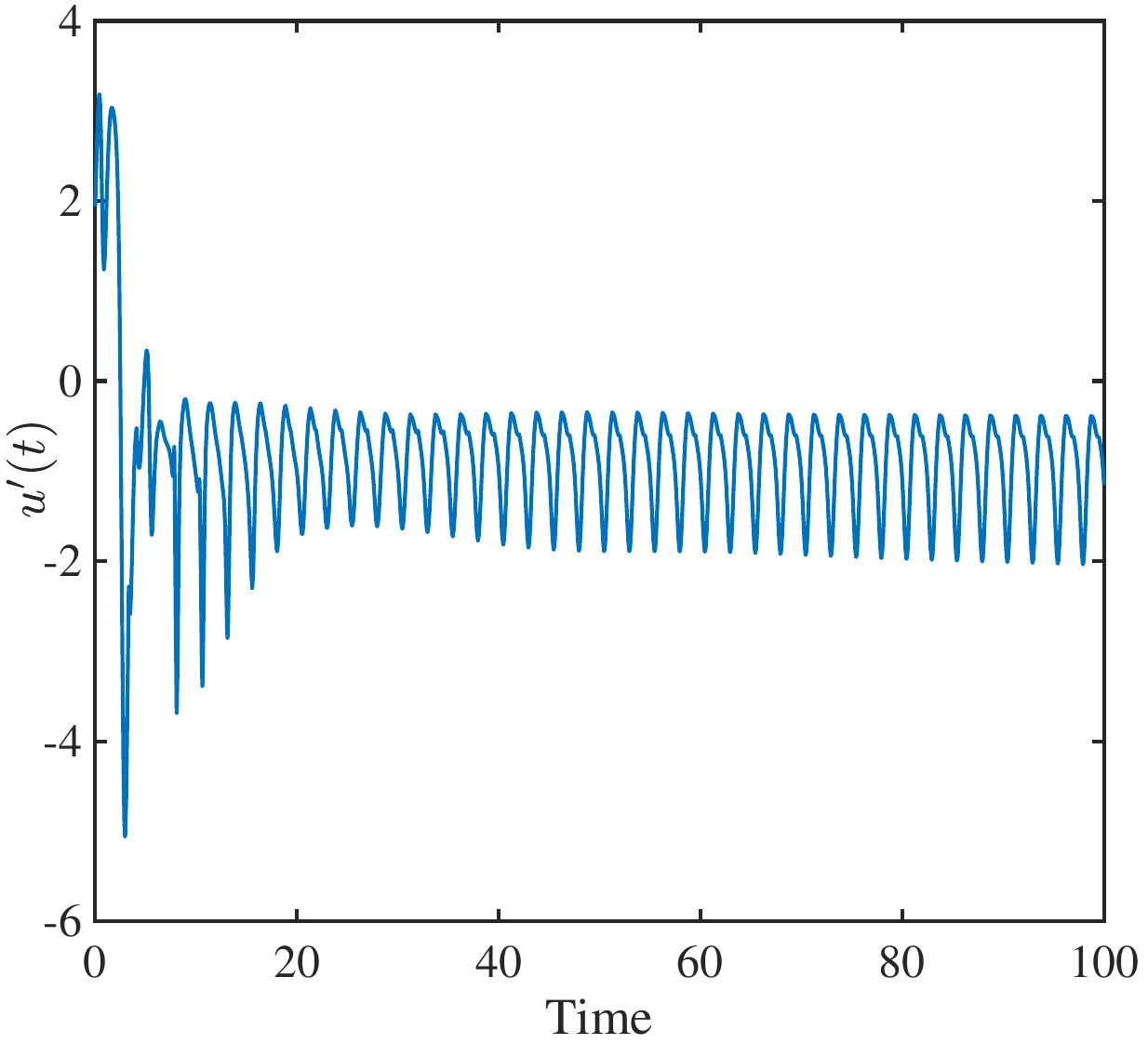}}
\end{tabular}
\caption{(a) Desired response of the oscillator with the initial states $(\dot{x}(0),x(0))=(1.314,1.483)$ and the
		controlled response of the oscillator for the initial states $(\dot{x}(0),x(0))=(0,-1)$. (b) Noninvasive control input of the oscillator.
  (c) Desired reference $x^r(t)$ (which is not a natural response of the system to $\sigma(t)$) and controlled response of the oscillator. (d) Invasive control input of the oscillator when $\mathbold{\xi}^r(t)$ is not a natural response of the system to $\sigma(t)$.}
\label{Ex1-ClosedLoop}
\end{figure}
By employing the proposed adaptive control strategy, it is guaranteed that the system response asymptotically converges to the desired response (see Fig.~\ref{Ex1-ClosedLoop}-(a)) while the control input asymptotically converges to zero, i.e., it becomes noninvasive (see Fig.~\ref{Ex1-ClosedLoop}-(b)).  
The control parameters we used for simulation are $\mathbold{S}=2\mathbold{I}_3$, $k=1$, $\kappa=1$, $\lambda_1=1$, $\gamma=0.1$, and $\epsilon=1$.

Under the proposed control strategy, $\mathbold{\xi}(t)$ does not converge to $\mathbold{\xi}^r(t)$ and $u'(t)$ is invasive if $\mathbold{\xi}^r(t)$ is not a natural response of the uncontrolled oscillator to $\sigma(t)$. To illustrate this, a simulation is performed by choosing a $\mathbold{\xi}^r(t)$ whose Fourier coefficients have up to 30 percent deviations from the coefficients of $\mathbold{\xi}^*(t)$.
The system response and control input are shown in Fig.~\ref{Ex1-ClosedLoop}-(c) and Fig.~\ref{Ex1-ClosedLoop}-(d), respectively. As expected, $\mathbold{\xi}(t)$ does not converge to $\mathbold{\xi}^r(t)$ and $u'(t)$ is invasive.

\subsection{Cross-beam structure}\label{Sec:CBS}
The cross-beam structure studied in~\cite{RensonProcA:18} and inspired by the physical system tested in~\cite{EhrhardtJSV:19} is now considered. This structure has the particularity of having its first two vibration modes very close in frequency ($\approx$ 0.5 Hz apart), leading to 1:1 mode interactions. As such, the model of the structure is a two degrees-of-freedom modal model given by
\begin{equation*}
\ddot{\mathbold{x}}(t)+\mathbold{\Xi}\dot{\mathbold{x}}(t)+\mathbold{\Lambda} \mathbold{x}(t)+\mathbold{N}(\mathbold{x}(t))=\mathbold{\sigma}(t)+\mathbold{u}'(t),
\end{equation*}
with
\begin{equation*}
\begin{split}
\mathbold{x}(t)&=\begin{bmatrix}x_1(t)\\ x_2(t)\end{bmatrix},\mathbold{\Xi}=\begin{bmatrix}
                          2\zeta_1\omega_1 & 0 \\
   0 & 2\zeta_2\omega_2
         \end{bmatrix},\mathbold{\Lambda}=\begin{bmatrix}
 \omega_1^2 & 0 \\
0 & \omega_2^2
    \end{bmatrix},\mathbold{N}(\mathbold{x}(t))=\begin{bmatrix}\mathbold{N}_1(\mathbold{x}(t))\\\mathbold{N}_2(\mathbold{x}(t))
 \end{bmatrix}\\
\mathbold{N}_1(\mathbold{x}(t))&=1/2\gamma_{11}x_1(t)^2+1/2(\gamma_{21}+\gamma_{31})x_1(t)x_2(t)+1/2\gamma_{41}x_2(t)^2+1/3\gamma_{51}x_1(t)^3\\&\quad+1/3(\gamma_{61}+\gamma_{71})x_1(t)x_2(t)^2+1/3(\gamma_{81}+\gamma_{91})x_1(t)^2x_2(t)+1/3\gamma_{10,1}x_2(t)^3\\
\mathbold{N}_2(\mathbold{x}(t))&=1/2\gamma_{12}x_1(t)^2+1/2(\gamma_{22}+\gamma_{32})x_1(t)x_2(t)+1/2\gamma_{42}x_2(t)^2+1/3\gamma_{52}x_1(t)^3\\&\quad+1/3(\gamma_{62}+\gamma_{72})x_1(t)x_2(t)^2+1/3(\gamma_{82}+\gamma_{92})x_1(t)^2x_2(t)+1/3\gamma_{10,2}x_2(t)^3,
 \end{split}
\end{equation*}
where $x_1(t)$ and $x_2(t)$ denote the displacements of the first and second modes, respectively, $\zeta_1$ and $\zeta_2$ denote the linear damping ratios, $\omega_1$ and $\omega_2$ denote the natural frequencies which here are assumed to be unknown, and $\gamma_{ij},i\in\{1,2,\ldots,10\},i\in\{1,2\},$ are the unknown parameters of the nonlinear term $\mathbold{N}(\mathbold{x}(t))$. It is worth mentioning that such a model can describe a wide range of mechanical structures with geometric nonlinearity, and is therefore not limited to the cross-beam example considered here.

For the cross-beam system, the unknown parameters are set as $\zeta_1=0.0076$, $\zeta_2=0.0026$, $\omega_1=101.6$, $\omega_2=104.6$, $\gamma_{11}=113.321$,  $\gamma_{21}=-104.755$, $\gamma_{31}=-104.755$, $\gamma_{41}=-29.740$, $\gamma_{51}=3.836\times 10^8$, $\gamma_{61}= 2.451\times 10^7$, $\gamma_{71}=4.902\times 10^7$, $\gamma_{81}=1.929\times 10^8$, $\gamma_{91}=9.644\times 10^7$, $\gamma_{10,1}=6.104\times 10^6$, $\gamma_{12}=-104.755$, $\gamma_{22}=-29.740$, $\gamma_{32}=-29.740$, $\gamma_{42}=85.367$, $\gamma_{52}=9.644\times 10^7$, $\gamma_{62}= 6.104\times  10^6$, $\gamma_{72}=1.221\times 10^7$, $\gamma_{82}=4.902\times 10^7$, $\gamma_{92}=2.451\times 10^7$, and $\gamma_{10,2}=2.351\times 10^6$. The frequency response of the structure for the first mode under the excitation $\mathbold{\sigma}(t)=[1.261\cos(\omega t)~ 0.318\cos(\omega t)]^\top$ is computed using numerical continuation and shown in Fig.~\ref{Ex2Bifurcation}. 

\begin{figure}[tb]
	\centering
	\includegraphics[width=0.49\textwidth]{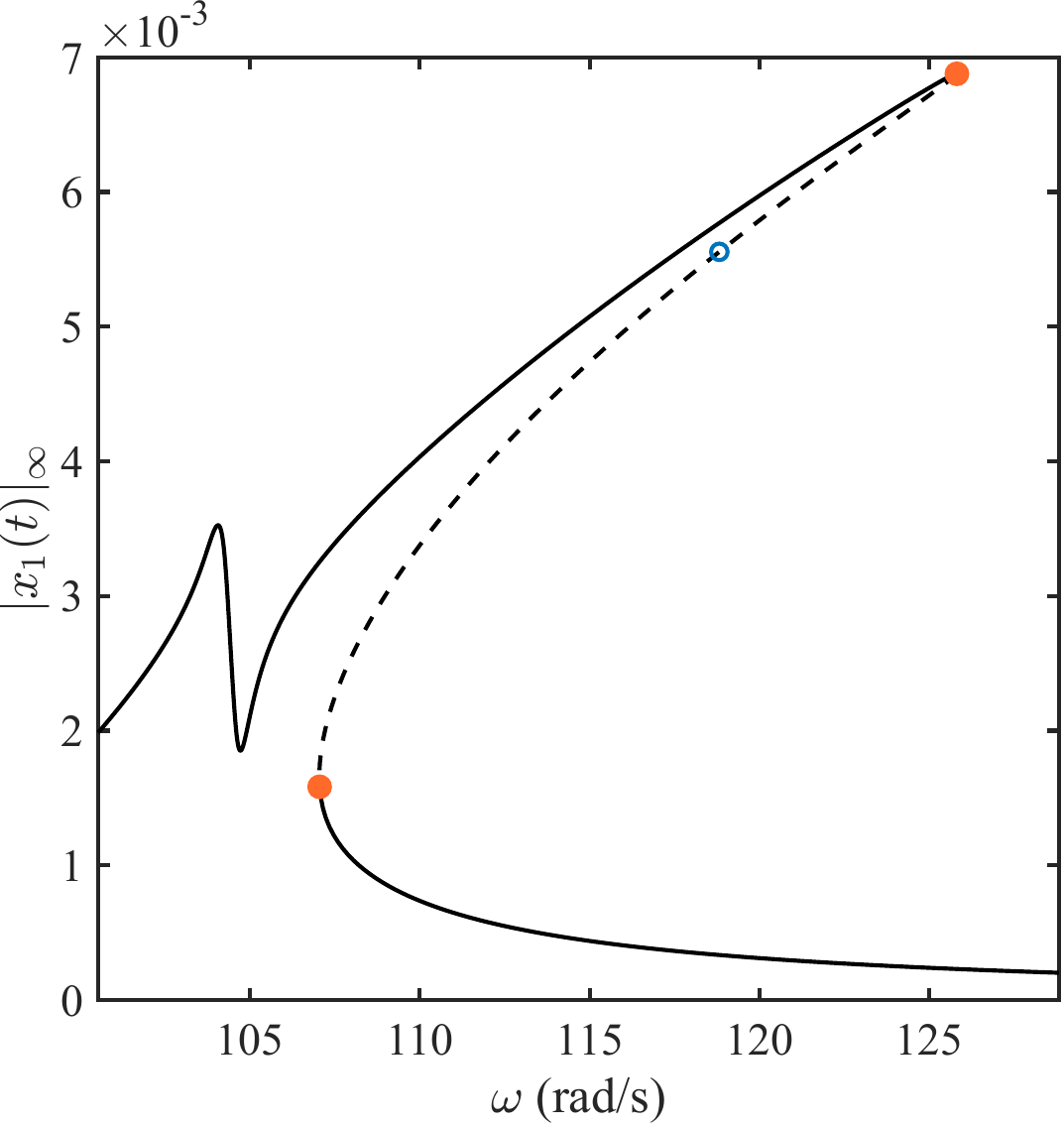}
	\caption{Frequency response curve associated with the first mode of the cross-beam structure. The specific point associated with $\mathbold{\xi}^*(t)$ is shown by a circle ($\textcolor{matlab_blue}{\circ}$), the dashed line represents unstable responses, and dots ($\textcolor{matlab_orange}{\bullet}$) show limit-point bifurcations.}
\label{Ex2Bifurcation}
\end{figure}

To demonstrate the control method, we consider a reference signal  $\mathbold{\xi}^r(t)=\mathbold{\xi}^*(t)=\begin{bmatrix}
	\dot{x}_1^*(t)  &  \dot{x}_2^*(t)&  x_1^{*}(t) & x_2^{*}(t)
\end{bmatrix}^\top$ which is an unstable response of the structure to $\mathbold{\sigma}(t)$ at $\omega=118.814$ (see the circle marker in Fig.~\ref{Ex2Bifurcation}). This desired response can be approximated as
\begin{equation*}
\begin{split}
      x_1^*(t)&\approx 10^{-4}\times  \big(- 35.344\cos(wt) + 0.521\cos(3wt)+ 0.002\cos(5wt)\\&\quad+ 42.08\sin(wt)+0.303\sin(3wt) - 0.006\sin(5wt)\big)\\
 x_2^*(t)&\approx 10^{-4}\times  \big(- 10.974\cos(wt)+ 0.132\cos(3wt)+ 0.001\cos(5wt)\\&\quad+ 12.358\sin(wt)+0.077\sin(3wt)- 0.002\sin(5wt)\big),\\
\end{split}
\end{equation*}
and hence the initial states associated with this response are $$(\dot{x}^*_1(0),\dot{x}^*_2(0),x^*_1(0),x^*_2(0))=(0.5104, 0.1495, -0.0035, -0.0011).$$

To achieve the desired response, we employ the proposed adaptive control strategy with $\mathbold{S}=2\times 10^4\mathbold{I}_{18}$, $k= 10^{-4}$, $\kappa=10^{-4}$, $\lambda_1=10^{-4}$, $\gamma=10^{3}$, and $\epsilon=1$. The obtained response and control inputs are presented in Fig.~\ref{Ex2-ClosedLoop}. According to the figures, the system response asymptotically converges to the desired response and the control input is noninvasive.

\begin{figure}[t]
\centering
\begin{tabular}{cc}
\subfloat[]{\includegraphics[width=0.45\textwidth]{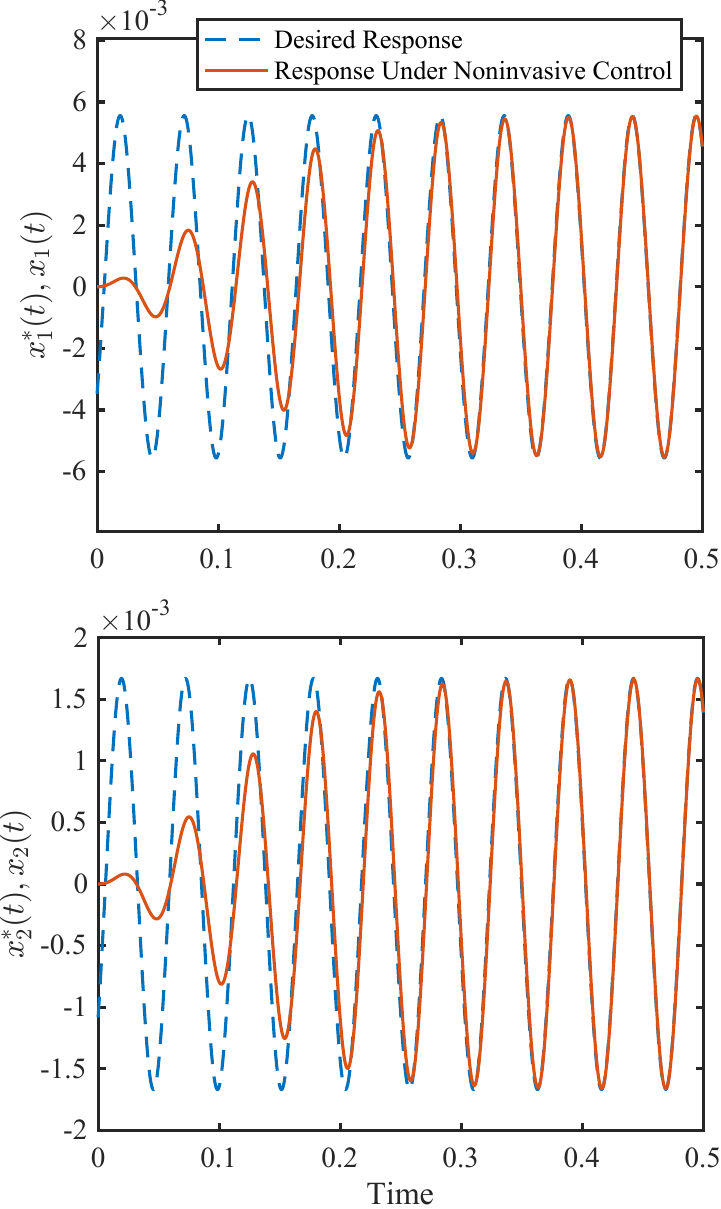}} & 
\subfloat[]{\includegraphics[width=0.45\textwidth]{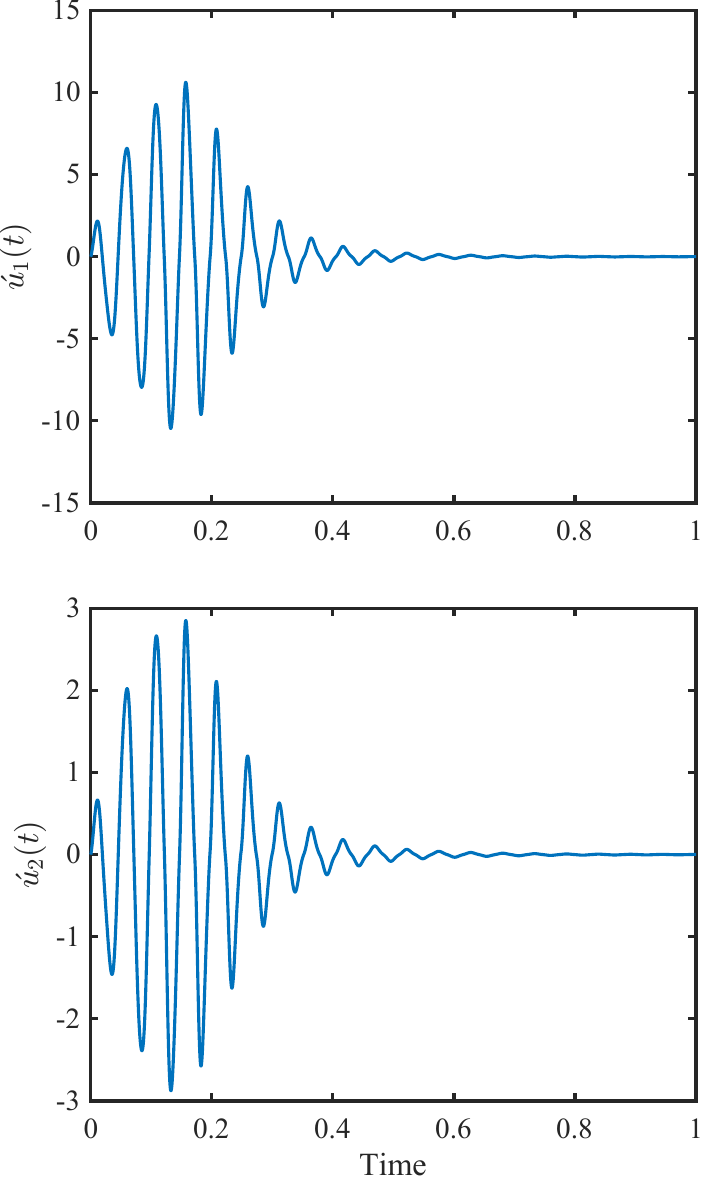}}
\end{tabular}
\caption{(a) Desired response of the cross-beam structure with the initial states $(\dot{x}_1(0),\dot{x}_2(0),x_1(0),x_2(0))=(0.5104,0.1495,-0.0035,-0.0011)$ and the
		controlled response of the cross-beam structure for the initial states $(\dot{x}_1(0),\dot{x}_2(0),x_1(0),x_2(0))=(0,0,0,0)$.
  (b) Noninvasive control inputs of the cross-beam structure.}
\label{Ex2-ClosedLoop}
\end{figure}

The proposed control strategy assumes the knowledge of the function $\mathbold{F}(\mathbold{\xi}(t))$. Simulations show that the proposed control strategy has some robustness in the presence of some uncertainties in $\mathbold{F}(\mathbold{\xi}(t))$. To show this, we consider the case where all the cross terms $x_1(t)x_2(t)$, $x_1(t)x_2(t)^2$, and $x_1(t)^2x_2(t)$ are overlooked in the controller whilst they are still present in the true system model. Simulation results show that the controller can still track the desired reference and achieve noninvasive control input (Fig. \ref{Ex22-ClosedLoop}). In this example, as the reference signal $\mathbold{\xi}^r(t)$ satisfies the true dynamical model of the uncontrolled system, by convergence of $\mathbold{\xi}(t)$ to $\mathbold{\xi}^r(t)$, the effect of the missed terms is compensated. This example illustrates that the tracking of a desired reference signal and the noninvasiveness of the control input can be obtained in the presence of model uncertainty provided the missed terms are not too large to be destabilizing. However, in general, it is always possible to find nonlinear terms that would lead to an unstable controlled system if not considered in the controller design (i.e., included in the true system model but not in the controller).

In the presence of model mismatch, and in the case when $\mathbold{\xi}^r(t)$ is not equal to a natural response of the system to the excitation $\mathbold{\sigma}(t)$, the system dynamics can either settle to a natural response of the system or not, regardless of the control reference. In the case where $\mathbold{\xi}(t)$ does not settle to a natural response $\mathbold{\xi}^*(t)$, the control input $\mathbold{u}'(t)$ will be invasive. To show this aspect of the proposed control strategy, we considered a simulation scenario where $\mathbold{\xi}^r(t)$ is chosen by considering up to 30 percent deviations in the Fourier coefficients describing the natural response $\mathbold{\xi}^*(t)$. The invasive control input of the system in this scenario is shown in Fig. \ref{Ex2up3}. In the case where $\mathbold{\xi}(t)$ settles to a natural response of the system $\mathbold{\xi}^*(t)$, the first part of the control input, $\mathbold{\eta}(t)$, will be compensated by the second part, $k(\mathbold{z}-\mathbold{x}^{r(n-1)}(0))$, such that $\mathbold{u}'(t)$ is noninvasive. Note that, in the context of CBC, the lack of tracking performance can present a difficulty even if the system settles to a natural response and the control input $\mathbold{u}'(t)$ is noninvasive, because the observed natural dynamics might be far from the reference $\mathbold{\xi}^r(t)$, leading to potentially significant jumps along the continuation path.

\begin{figure}[t]
\centering
\begin{tabular}{cc}
\subfloat[]{\includegraphics[width=0.45\textwidth]{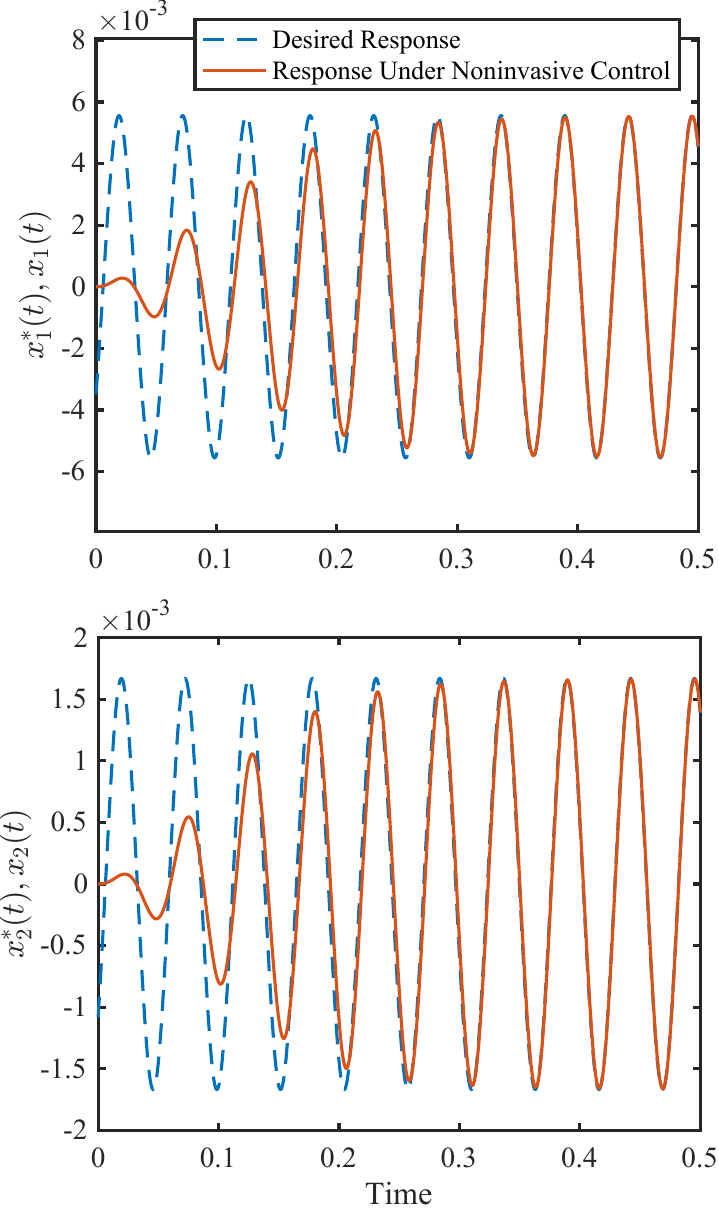}} & 
\subfloat[]{\includegraphics[width=0.45\textwidth]{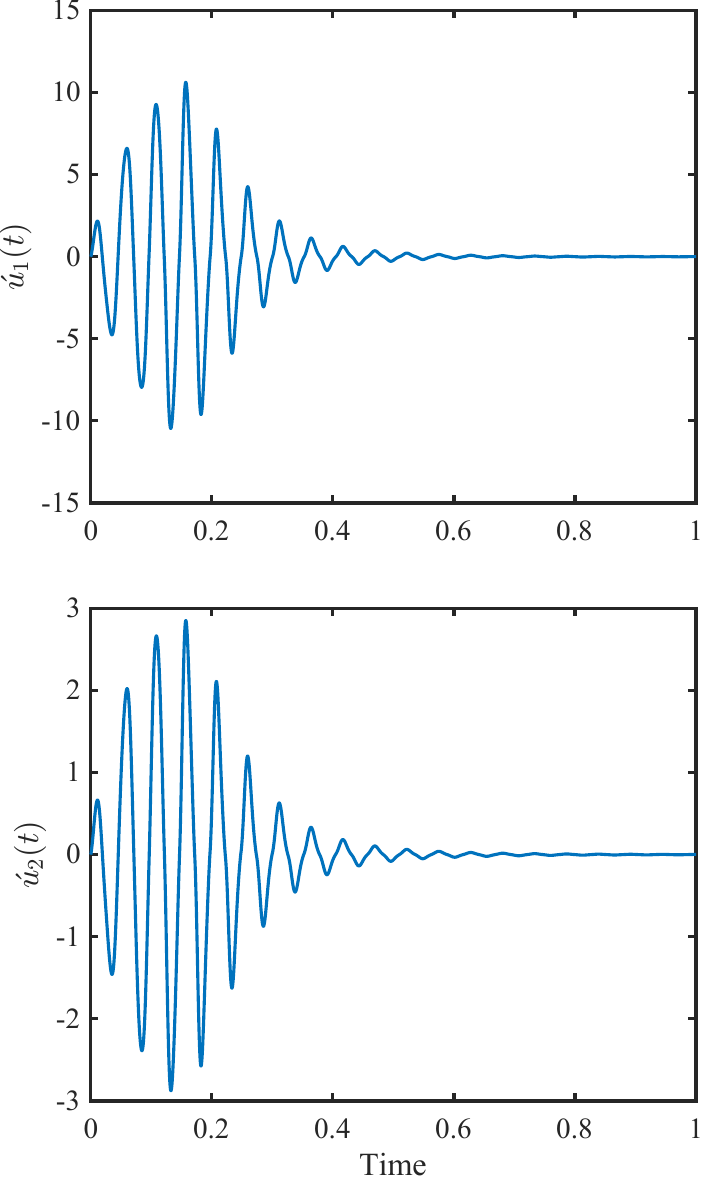}}
\end{tabular}
\caption{(a) Desired response of the cross-beam structure with the initial states $(\dot{x}_1(0),\dot{x}_2(0),x_1(0),x_2(0))=(0.5104,0.1495,-0.0035,-0.0011)$ and the
		controlled response of the cross-beam structure for the initial states $(\dot{x}_1(0),\dot{x}_2(0),x_1(0),x_2(0))=(0,0,0,0)$, when the model nonlinearities are partially unknown.
  (b) Noninvasive control inputs of the cross-beam structure, when the model nonlinearities are partially unknown.}
\label{Ex22-ClosedLoop}
\end{figure}

\begin{figure}[h]
	\centering	
	\includegraphics[width=0.95\textwidth]{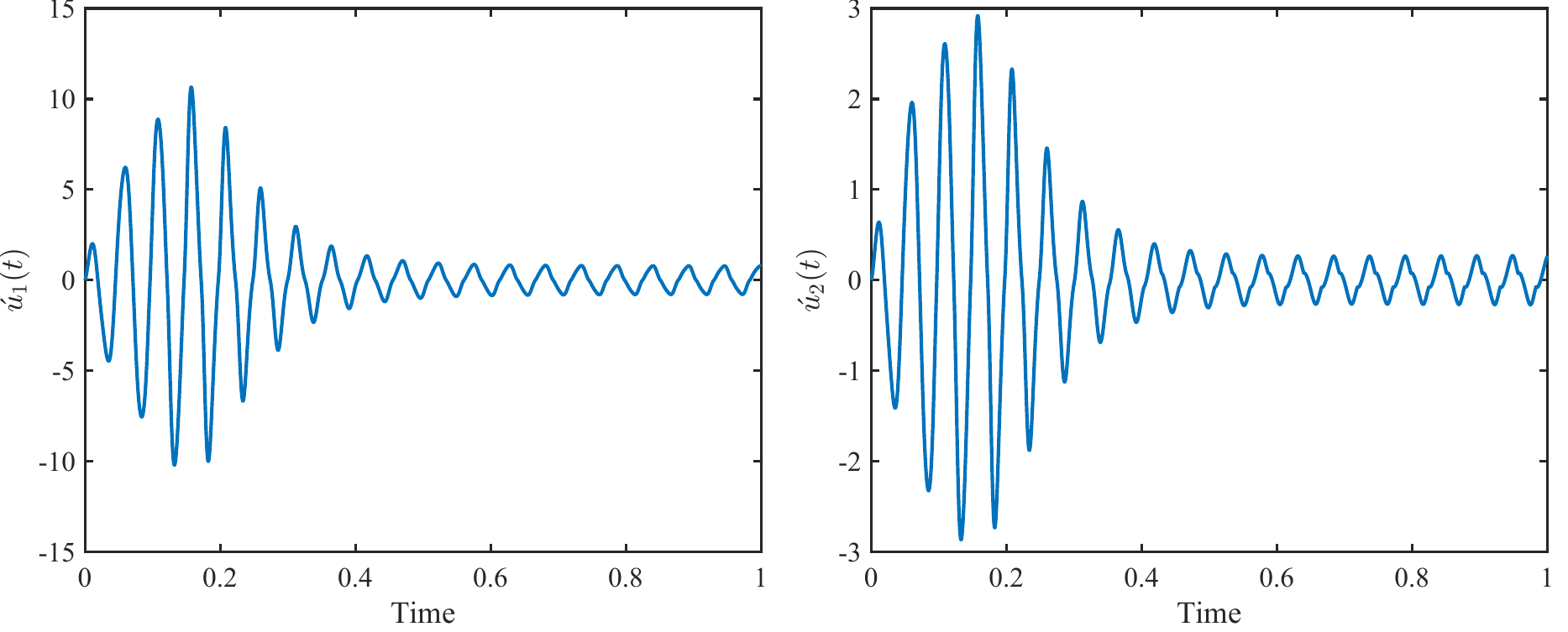}
		\caption{Invasive control input of the cross-beam structure when $\mathbold{\xi}^r(t)$ is not equal to a natural response of the system to the excitation $\mathbold{\sigma}(t)$ and the model nonlinearities are partially unknown.}
	\label{Ex2up3}
\end{figure}

\subsection{Cantilever beam with a nonlinear mechanism at its tip}
A cantilever beam structure with nonlinear springs attached at its free end is now considered. This structure has the particularity to exhibit a 1:3 mode interaction between its first and second bending modes~\cite{ShawMSSP:16}. The system can be described by the following modal equations:
\begin{equation*}
\ddot{\mathbold{x}}(t)+\mathbold{\Xi}\dot{\mathbold{x}}(t)+\mathbold{\Lambda} \mathbold{x}(t)+\mathbold{\Phi}^\top \mathbold{f}(\mathbold{\Phi}\mathbold{x}(t))=\mathbold{\Phi}^\top\mathbold{u}(t),
\end{equation*}
with
\begin{equation}\label{Swansea}
\begin{split}
\mathbold{x}(t)&=\begin{bmatrix}x_1(t)\\ x_2(t)\end{bmatrix},\mathbold{\Xi}=\begin{bmatrix}
                          2\zeta_1\omega_1 & 0 \\
   0 & 2\zeta_2\omega_2
         \end{bmatrix},\mathbold{\Lambda}=\begin{bmatrix}
 \omega_1^2 & 0 \\
0 & \omega_2^2
    \end{bmatrix}\\\mathbold{f}(\mathbold{\Phi}\mathbold{x}(t))&=\begin{bmatrix}0&0&0&2k_0\ell_0x'_4(t)\left(\frac{1}{a}-\frac{1}{\sqrt{a^2+{x'_4}(t)^2}}\right)\end{bmatrix}^\top,\mathbold{u}(t)=\begin{bmatrix}u_1(t)&u_2(t)&0&0\end{bmatrix}^\top\\
\mathbold{\Phi}\mathbold{x}(t)&=\begin{bmatrix}x'_1(t)&x'_2(t)&x'_3(t)&x'_4(t)\end{bmatrix}^\top=\mathbold{x}'(t),
 \end{split}
\end{equation}
where $x_1(t)$, $x_2(t)$, $\zeta_1$, $\zeta_2$, $\omega_1$, and  $\omega_2$ have the same meaning as in Section~\ref{Sec:CBS}, $k_0$ is the stiffness coefficient of the springs, $\ell_0$ is their original length, $a$ is the half span of the mechanism, and $\mathbold{\Phi}\in \mathbb{R}^{4\times 2}$ describes the relation between the modal variables and the physical deflection of the beam at four specific points along the beam denoted by $\mathbold{x}'(t)$. For this system, the parameters are $\zeta_1=0.01$, $\zeta_2=0.01$, $\omega_1=67.395$, $\omega_2=235.783$, $k_0=910$, $\ell_0= 0.018$, $a=0.019$, and 
\begin{equation*}
    \mathbold{\Phi}=\begin{bmatrix}
    -0.1603  & -0.6821\\
   -1.7748  & -4.4598 \\
   -5.9745   & 6.1940\\
   -6.1389   & 7.0245 
    \end{bmatrix}.
\end{equation*}
To describe~\eqref{Swansea} with the model class~\eqref{system}, $a$ and the mode shape matrix $\mathbold{\Phi}$ are assumed to be known, which in practice can be achieved using standard linear system identification techniques. 
All the other model parameters are considered unknown. Moreover, since the system has two degrees-of-freedom, we assume we have two control inputs. The response of the structure for the first mode under the excitation $\mathbold{\sigma}(t)=[2\cos(\omega t)~ 0 ~ 0~ 0]^\top$ is computed using numerical continuation and shown in Fig.~\ref{Ex3Bifurcation}.

\begin{figure}[h!]
	\centering
	\includegraphics[width=0.5\textwidth]{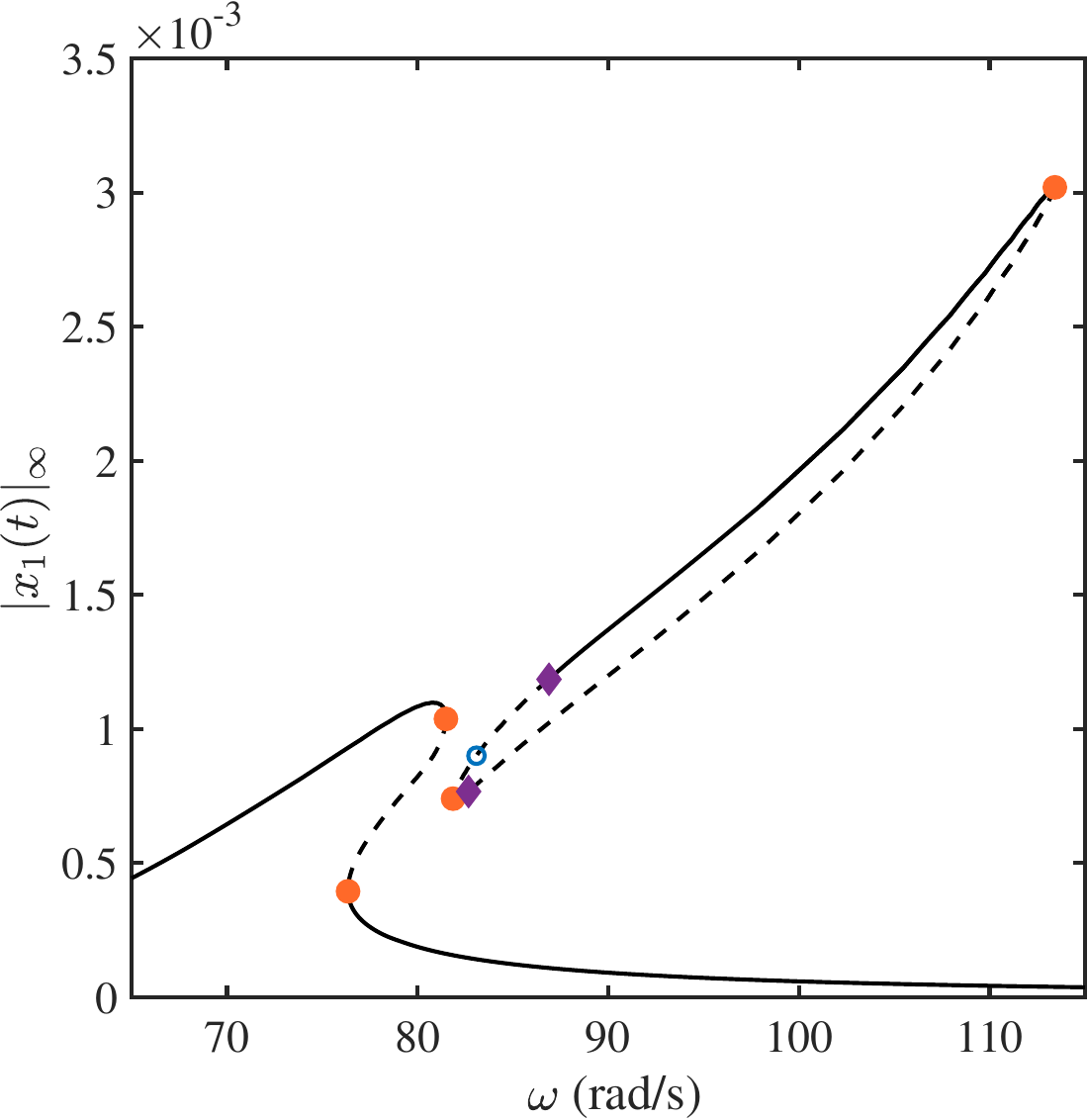}
	\caption{Frequency response associated with the first mode of the cantilever beam structure. The specific point associated with $\mathbold{\xi}^*(t)$ is shown by a circle ($\textcolor{matlab_blue}{\circ}$), the dashed line represents unstable responses, and dots ($\textcolor{matlab_orange}{\bullet}$) and diamonds ($\textcolor{matlab_purple}{\blacklozenge}$) show limit-point and Neimark-Sacker bifurcations, respectively.}
\label{Ex3Bifurcation}
\end{figure}

A reference signal
$\mathbold{\xi}^r(t)=\mathbold{\xi}^*(t)=\begin{bmatrix}
	\dot{x}_1^*(t)  &  \dot{x}_2^*(t)&  x_1^{*}(t) & x_2^{*}(t)
\end{bmatrix}^\top$ is considered to be an unstable response of the beam structure to $\mathbold{\sigma}(t)$ when $\omega= 83.085$ (see the circle marker in Fig.~\ref{Ex3Bifurcation}). 
This desired response can be approximated as
\begin{equation*}
\begin{split}
      x_1^*(t)&\approx 10^{-4}\times  \big(-2.834\cos(wt)+ 0.254\cos(3wt)-0.0341\cos(5wt)- 0.001\cos(7wt)\\
      &\quad- 8.241\sin(wt) + 0.066\sin(3wt)+ 0.026\sin(5wt)- 0.007\sin(7wt)\big) \\
 x_2^*(t)&\approx 10^{-4}\times  \big(- 0.487\cos(wt)- 2.6\cos(3wt)+0.055\cos(5wt)+ 0.001\cos(7wt)\\
      &\quad- 0.469\sin(wt)- 0.219\sin(3wt)- 0.044\sin(5wt)+ 0.009\sin(7wt)\big),
\end{split}
\end{equation*}
and hence the initial states associated with this response are 
$$(\dot{x}_1(0),\dot{x}_2(0),x_1(0),x_2(0))=(-0.066,-0.011,-2.613\times 10^{-4},-3.031\times 10^{-4}).$$

To achieve the desired natural response $\mathbold{\xi}^*(t)$, we employ the proposed adaptive control strategy with $\mathbold{S}=2\times 10^5\mathbold{I}_{6}$, $k=10^{-4}$, $\kappa=10^{-4}$, $\lambda_1=10^{-4}$, $\gamma=10^5$, and $\epsilon=1$. 
The simulation results, presented in Fig.~\ref{Ex3-ClosedLoop}-(a), show that the system response asymptotically converges to the desired one. As shown in Fig.~\ref{Ex3-ClosedLoop}-(b), the control input becomes noninvasive as the system reaches the desired response.

\begin{figure}[t]
\centering
\begin{tabular}{cc}
\subfloat[]{\includegraphics[width=0.45\textwidth]{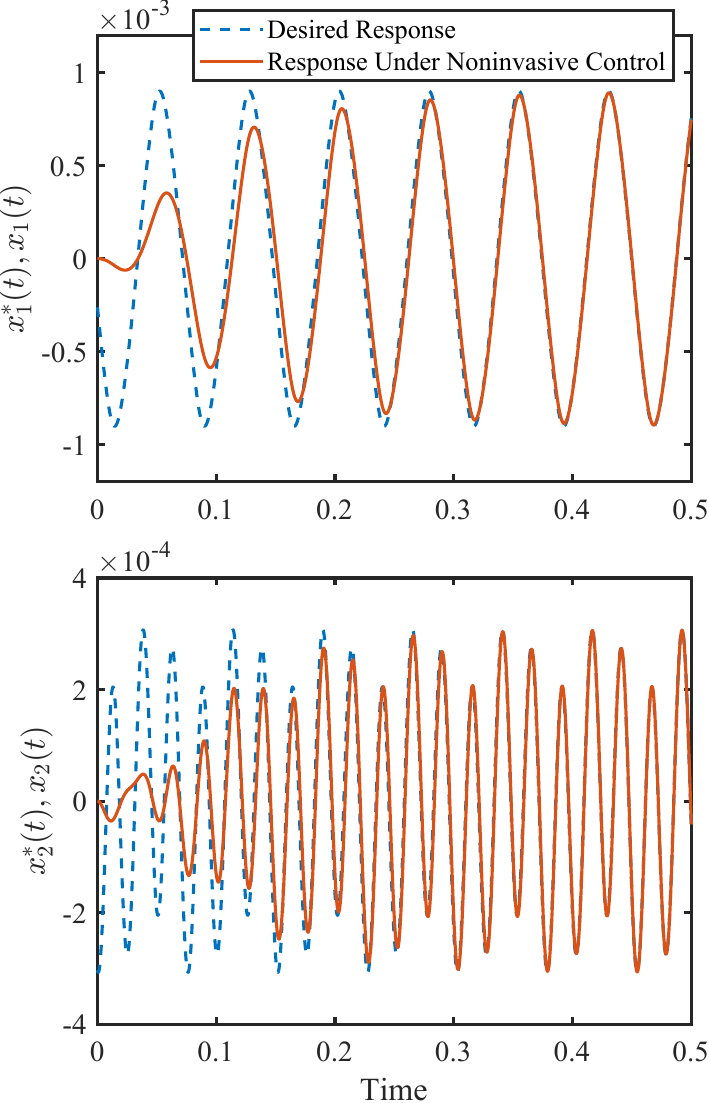}} & 
\subfloat[]{\includegraphics[width=0.45\textwidth]{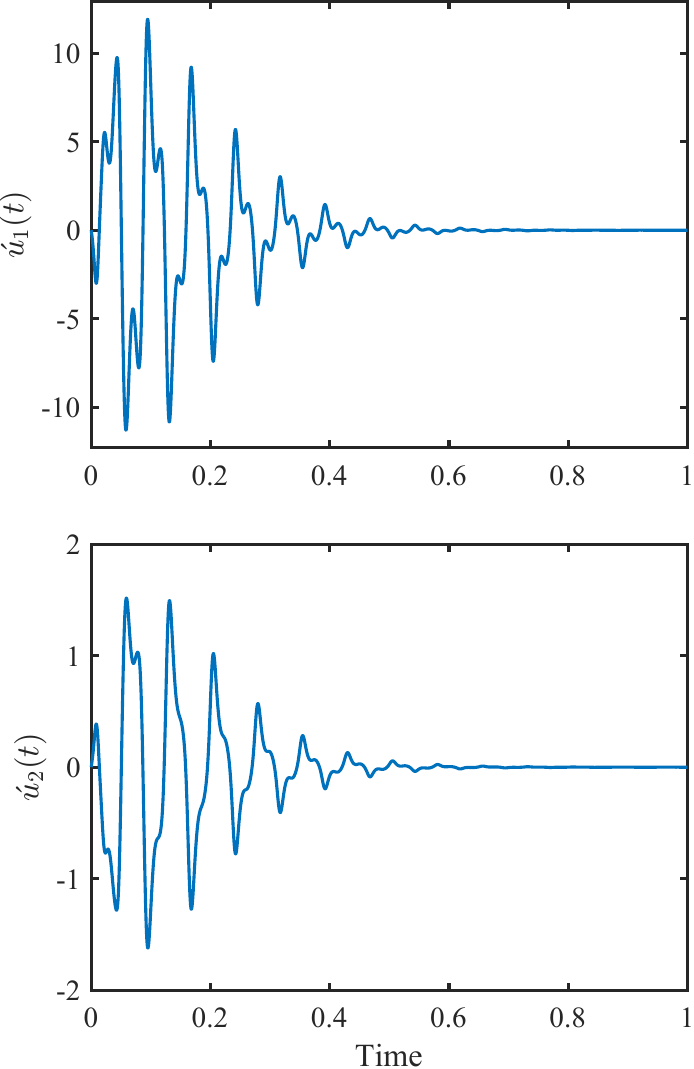}}
\end{tabular}
\caption{(a) Desired response of the cantilever beam structure  with the initial states $(\dot{x}_1(0),\dot{x}_2(0),x_1(0),x_2(0))=(-0.066,-0.011,-2.613\times 10^{-4},-3.031\times 10^{-4})$ and the controlled response of the cantilever beam structure for the initial states $(\dot{x}_1(0),\dot{x}_2(0),x_1(0),x_2(0))=(0,0,0,0)$. (b) Noninvasive control inputs of the cantilever beam structure.}
\label{Ex3-ClosedLoop}
\end{figure}

One of the contributions of the paper is that, contrary to the existing literature, the noninvasiveness of the proposed control strategy does not rely on the true estimation of the system parameters and thereby does not rely on the persistent excitation of the desired response. To show this, the norm of the estimation error $\widetilde{\mathbold{\theta}}(t)=\hat{\mathbold{\theta}}(t)-\mathbold{\theta}$ when $\hat{\mathbold{\theta}}(t)$ is initialized to zero and $\|\mathbold{\theta}\|=5.578\times 10^4$ is shown in Fig.~\ref{Ex3NonPE}-(a). According to the figure,  the estimation error does not converge to zero, and a negligible update in the model parameters $\hat{\mathbold{\theta}}(t)$ occurs. Defining the matrix
\begin{equation*}
\mathbold{M}_e(t)=\int_{t}^{t+s}\mathbold{F}^\top (\mathbold{\xi}(\tau))\mathbold{F}(\mathbold{\xi}(\tau)) d\tau,
\end{equation*}
the persistent excitation condition is satisfied if $\mathbold{M}_e(t)$, $t\rightarrow \infty$, is positive definite, i.e., its smallest eigenvalue is positive, for some real positive constant $s$  ($s$ can be the time period in the case of periodic response).
The smallest eigenvalue of $\mathbold{M}_e(t)$  along the system trajectory is shown in Fig. \ref{Ex3NonPE}-(b), which stays at zero and illustrates that $\mathbold{M}_e(t)$ is not positive definite. Hence, the controlled system is not persistently excited along the system trajectory.

\begin{figure}[t]
\centering
\begin{tabular}{cc}
\subfloat[]{\includegraphics[width=0.45\textwidth]{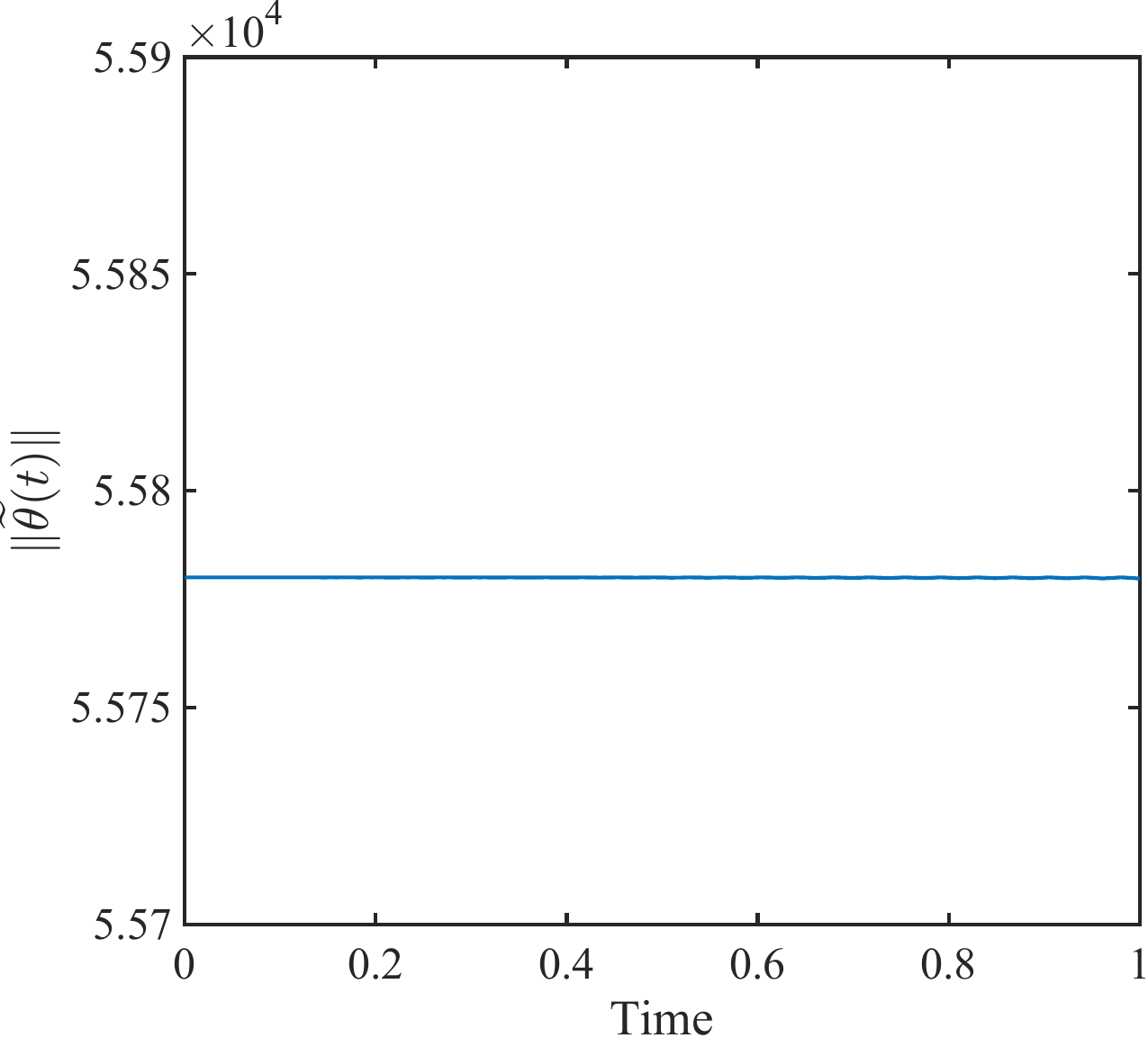}} & 
\subfloat[]{\includegraphics[width=0.45\textwidth]{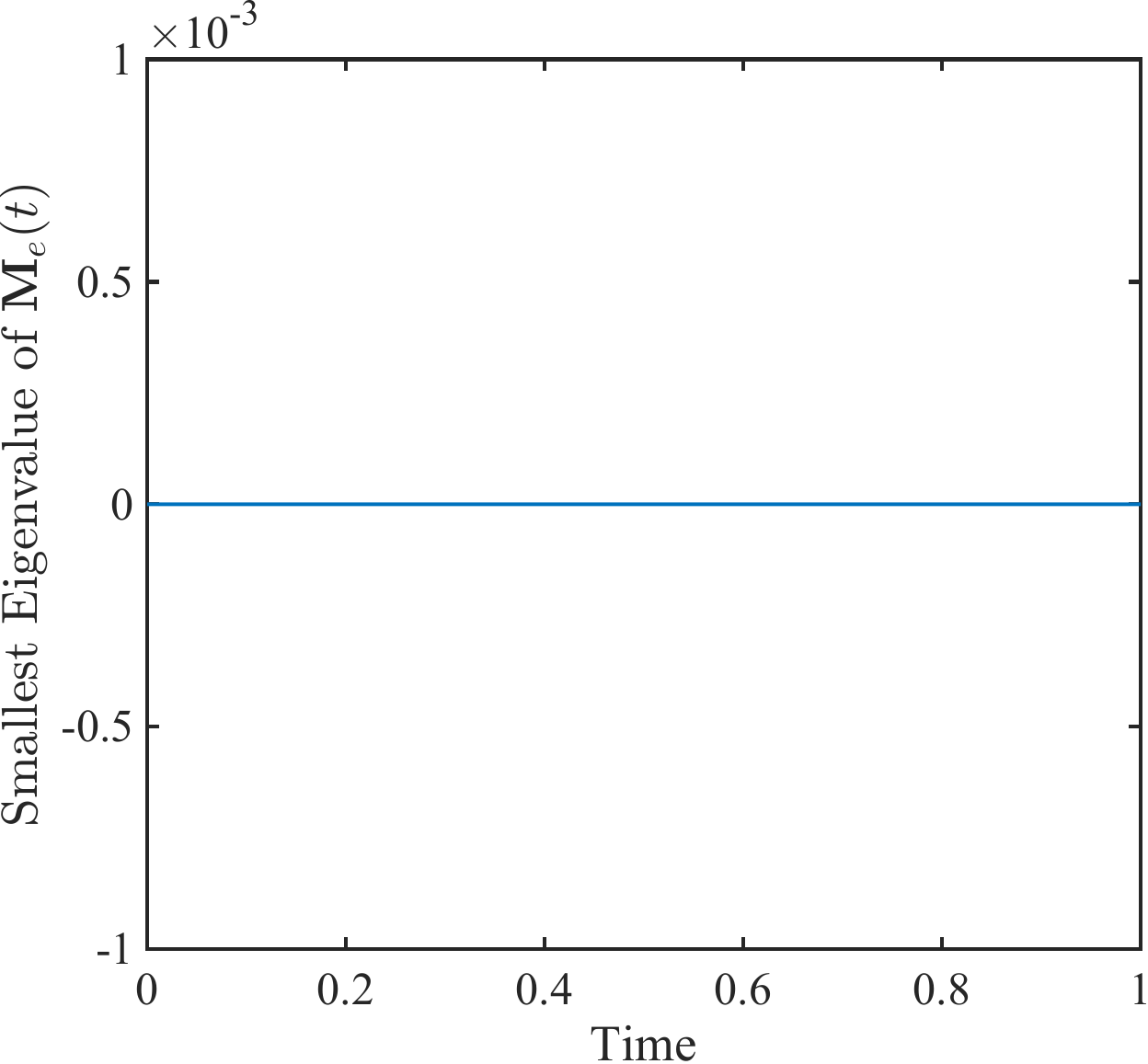}}
\end{tabular}
\caption{(a) Error of estimating the parameters of the cantilever beam structure. (b) The smallest eigenvalue of the matrix $\mathbold{M}_e(t)$ associated with the cantilever beam structure.}
\label{Ex3NonPE}
\end{figure}

\section{Conclusion and Future Work}~\label{SecConclusions}
Control-based continuation is an increasingly popular method to characterize the nonlinear dynamics of a physical system directly during tests, without the need for a model. Despite being successfully applied to a wide range of experiments and providing invaluable results for model calibration and validation, the general and systematic design of the noninvasive controller at the core of the method is lacking.

In this paper, an adaptive strategy for noninvasive control of nonlinear systems that are linear in their unknown parameters is proposed. The method guarantees the convergence to the desired response and the noninvasiveness of the control input if and only if the reference signal  is a natural response of the uncontrolled system to  the excitation  $\mathbold{\sigma}(t)$. Compared to other available methods, the proposed control strategy does not require any knowledge of the system parameters nor their exact estimation, and it does not assume stable linearized dynamics. This makes the proposed methodology applicable to a much wider range of nonlinear systems, with a wider range of response types. Furthermore, the proposed method does not rely on the persistence of the excitation, which, in the context of CBC, means there is no need to guarantee/check the persistent nature of the excitation during experiments for the noninvasiveness of the controller. This has the potential to increase CBC accuracy and reduce overall testing time.

\end{document}